\newtheorem{theorem}{Theorem}[section]
\newtheorem{proposition}{Proposition}[section]
\newtheorem{lemma}[theorem]{Lemma}
\theoremstyle{definition}
\newtheorem{definition}[theorem]{Definition}
\newtheorem{corollary}{Corollary}[theorem]
\newtheorem{remark}[theorem]{Remark}
\numberwithin{equation}{section}
\begin{document}

\title{Graded linearity of Stanley-Reisner ring of broken circuit complexes}

\email{mrahmati@cimat.mx (M. Reza Rahmati), gflores@cio.mx (G. Flores)}%

\author{Mohammad Reza Rahmati, Gerardo Flores}
\address{}
\curraddr{}
\email{mrahmati@cimat.mx (M. Reza Rahmati), gflores@cio.mx (G. Flores)}
\thanks{}


\subjclass[2010]{ }

\date{}

\dedicatory{}

\begin{abstract}
This paper introduces two new notions of graded linear resolution and graded linear quotients, which generalize the concepts of linear resolution property and linear quotient for modules over the polynomial ring $A=k[x_1, \dots ,x_n]$. Besides, we compare graded linearity with componentwise linearity in general. For modules minimally generated by a regular sequence in a maximal ideal of $A$, we show that graded linear quotients imply graded linear resolution property for the colon ideals. On the other hand, we provide specific characterizations of graded linear resolution property for the Stanley-Reisner ring of broken circuit complexes and generalize several results of \cite{RV} on the decomposition of matroids into the direct sum of uniform matroids. Specifically, we show that the matroid can be stratified such that each strata has a decomposition into uniform matroids. We also present analogs of our results for the Orlik-Terao ideal of hyperplane arrangements which are translations of the corresponding results on matroids.
\end{abstract}

\maketitle


\section{Introduction}
We consider finitely generated graded modules over the polynomial ring, $A=k[x_1, \dots ,x_n]$ where $k$ is an infinite field. One of the essential facts about the finitely generated rings over $A$ is the Hilbert syzygy theorem \cite{P}, stating that any such module has a graded minimal $A$-resolution by free modules as 
\begin{equation}
\dots \longrightarrow \bigoplus_jA(-j)^{\beta_{p,j}} \longrightarrow \ \ \dots \ \ \longrightarrow \bigoplus_jA(-j)^{\beta_{0j}} \longrightarrow M  \longrightarrow 0   ,
\end{equation}
where the graded Betti numbers $\beta_{ij}$ of the graded module $M$ are the exponents appearing in a graded minimal free resolution. Another way to express it is $\beta_{ij}=\dim_k Tor_i(M,k)_j$. One can arrange the Betti numbers in a table by assigning $\beta_{i,i+j}$ in the $(i,j)$ entry, called \textit{the Betti table}. The graded $A$-module $M$ is said to have a linear resolution if its non-zero Betti numbers in minimal $A$-resolution appear just in one line of entries, successively in the table. In special cases, one can characterize this property by the combinatorial data associated with these modules. We generalize this concept by mentioning finitely generated graded modules where their non-zero Betti numbers appear successively in several lines. Then, an important question is the combinatorial identification of the above property. A parallel concept is that of having linear quotients. This notion is defined in terms of the colon ideals of the module $M$. It is well known that having a linear quotient implies the linear resolution property for colon ideals, \cite{HH}. 

In this text we also generalize the concept of quotient linearity, and we define the graded linear quotient property. We compare it with graded linear property and prove the analogous theorem in the graded case. Our proof uses a standard inductive technique of passing to the long exact Tor-sequence associated with short exact sequence on colon ideals of $M$. This generalization aims to study the notion of component-wise linearity, which plays a crucial role in commutative algebra. We show that graded linearity is a weaker property than component-wise linearity.  

For Cohen-Macaulay graded ideals $I$ in $A$, the property of having a linear resolution is characterized by the combinatorial data of the ideal such as its Hilbert polynomial. In this case, the Hilbert polynomial can be summarized in a single binomial coefficient. An interesting example is the Stanley-Reisner ideal $I_{<}(X)$ of the Broken circuit complex of matroid $X$. The Stanley-Reisner ideal $I_<(X)$ is generated by monomials on the broken circuits of $X$. The broken circuit complex of an ordered matroid $X$ is a Shellable complex where it shows the Stanley-Reisner ring $A/I_<(X)$ is Cohen-Macaulay \cite{P}. The linearity property, in this case, can be translated into a combinatorial decomposition of the matroid in terms of uniform matroids, [see \cite{RV}] in the form
\begin{equation} \label{eq:1}
X =U_{s,n-r+s} \bigoplus U_{r-s,r-s},
\end{equation} 
where matroid $U_{p,n}$ is the uniform matroid on $n$-element, whose independent sets are subsets having at most $p$ element. The proof of the Theorem uses the assertions of an external bound for the $f$-vectors of a $r-1$ dimensional shellable complex on the vector set $S$ of size $|S|=n$ in the form
\begin{equation} \label{eq:2}
f_{k-1}\geq \sum_{i=0}^{s-1} {n-r+i-1 \choose i}  {r-i  \choose k-i}, \qquad k=0, \dots ,r  
\end{equation}
when all $s$-element subsets of $S$ belong to $\Delta$. The decomposition in \eqref{eq:1} holds when the equality holds in \eqref{eq:2}, \cite{B}. 

A significant attempt in this work is to find analogous characterizations of graded linearity for these ideals. One also asks how the decomposition of the matroid can be extended. In other words, we investigate what the corresponding operation in the category of matroids is. To answer this question, we introduce the notion of stratification (filtration) of matroids by submatroids. Stratification of a matroid $X$ is a decreasing filtration sequence,
\begin{equation} \label{eq:stratify}
X=X_0 \supset X_1 \supset \dots \supset X_d, \qquad S_j=X_j \smallsetminus X_{j+1}
\end{equation} 
by submatroids $X_j$, where we call $X_j \smallsetminus X_{j+1}$ the $j$-th strata of the stratification. If $X$ is a rank $r$ loopless matroid on $[n]$ we establish a new inequality
\begin{equation}
I_{j-1} \geq \sum_{i=0}^{c-1}\sum_{l=1}^dc_l^{(S)}{n-r+l-1 \choose l}{r-i \choose j-i} 
\end{equation}
generalizing \eqref{eq:2}, and the equality holds if and only if the ideal $I_{<}(X)$ has graded linear resolution. In that case, we first establish that the Hilbert function of $I_<(X)$ has the form 
\begin{equation}
H(\frac{A}{I_<(X)},s)=c_0{s+q-1 \choose s}+c_1{s+q-2 \choose s}+ \cdots +c_{d-1}{s+q-d \choose s}
\end{equation}
for some $d$. The coefficients $c_i=c_i(I_<(X))$ can be easily calculated, (called Hilbert coefficient of $I_<(X)$). In this case there exists a stratification as in \eqref{eq:stratify} such that  
\begin{equation}
S_j= U_{r_j-s_j,r_j-s_j} \bigoplus U_{s_j,n_j-r_j+s_j}
\end{equation}
where $s_j \leq r_j$. We have $n=\Sigma_j n_j, \ r=\Sigma_j r_j$. 

To an arrangement of hyperplanes in a vector space, one can associate a matroid, where the independent sets correspond to the independent hyperplanes. In this way, some properties for matroids can be translated into the language of hyperplane arrangements. The properties above can be applied to certain graded ideals associated with a hyperplane arrangement. In particular, they apply to the Orlik-Solomon ideal and also the Orlik-Terao ideal of the arrangements. As an application, we provide similar assertions for graded linearity of the Orlik-Terao ideal of hyperplane arrangements, which translates the above results to this area. We discuss the Koszul property of the Stanley-Reisner ring of the broken circuit complex. We define an extension of Koszulness, namely graded Koszulness, and characterize this concept via the decomposition of the associated matroid. 

Another important concept is the entire intersection property of Orlik-Solomon algebra and the Orlik-Terao ideal of hyperplane arrangements, \cite{CNR}, \cite{RV}, \cite{F}, \cite{RVi}. There appears several characterizations of this property in \cite{RV} concerning the linearity of the corresponding ideals. We express the generalizations of these theorems to the graded linearity as a natural application to our results. We also present an application to graphs and their cycle matroids. Specifically, we discuss the entire intersection property of the facet ideal of $r$-cyclic graph $G_{n,r}$. We provide an alternative for the proof of the Cohen-Macaulayness of the facet ideal of $G_{n,r}$. The properties of the facet ideal $I_F(\Delta)$ has been discussed in \cite{AR}, \cite{Fa}, \cite{Po}, \cite{Sta}, \cite{AKR}. Our method gives a simple proof of the Cohen-Macaulayness of the facet ideal of $G_{n,r}$. This property was proved in \cite{AR} by direct methods. Our proof is based on the criteria characterizing the complete intersection properties on the cycle matroid of a graph.
\subsection{Related works}
This work is based on the results of \cite{NR, RV} about several combinatorial characterizations of linear resolution property for the Stanley-Reisner ring of broken circuit complexes. The main theorem that is proven in that work is that of characterization for the decomposition of the broken circuit complex of a matroid when the associated Stanley-Reiner ring has a linear resolution. The authors also express the analogs theorems on the arrangement of hyperplanes. The results identify the linear resolution property for the Orlik-Solomon ideal of the arrangement and the complete intersection and Koszul property of their Orlik-Solomon and Orlik-Terao ideals. 

On the other hand, a comprehensive study of the combinatorics of the broken circuits of matroids is done \cite{B}. In this regard, we have generalized a fundamental argument of \cite{B} on the decomposition of Matroids based on a lower bound for the components of the $f$-vectors of shellable complexes. The lower bound in its extreme case implies a decomposition of the complex of independent subsets of a matroid into the direct sum of two uniform matroids; see also \cite{BZ}. Our approach in generalizing the above decomposition is through the stratification of a matroid by its submatroids. The concept has been explored in \cite{CDMS}; however, it has not worked out very much in the literature. Therefore, we have explained it through basic operations on matroids \cite{CDMS, W1, W2, W3}. The stratification describes a decreasing filtration by submatroids and results in a kind of \textit{graded} decomposition on the matroid, i.e., decomposition on each stratum. 

A comprehensive exposition on Koszul algebras and the related conjectures is presented in \cite{CNR}. The characterization of this property for Orlik-Terao Ideals of arrangement of hyperplanes can be considered as a special case of having general linear resolution mentioned above. In this case, the Orlik-Terao ideal has a 2-linear resolution, \cite{RV}. In this regard, the combinatorial identification of the Koszul property for algebras associated with matroids and hyperplane arrangements is one of the significant and crucial challenging problems in this area \cite{F, CNR, Sch, RVi, SY}.   

\subsection{Organization of text}
Section \ref{sec:prel} contains the basic definitions and theorems that we use throughout the text. We have divided that into six subsections containing different notions in combinatorial algebra. Sections \ref{sec:main_res} and \ref{sec:applications} consists of our main results and contributions. Specifically, Section \ref{sec:applications} consists of the applications of the results in Section \ref{sec:main_res} to the hyperplane arrangements. In the end, we also prove a complete intersection property for the broken circuit complex of $r$-cyclic graph. Finally, in Section {sec:conc}, we present some remarks and conclusions.
\section{Preliminaries}\label{sec:prel}
\subsection{Linear resolutions} 
Let $A=K[x_1, \dots ,x_n]$ be the polynomial ring over an infinite field $k$ and $\mathfrak{m}=(x_1, \dots ,x_n)$ the maximal ideal. A finitely generated graded $A$-module $M$ is said to have \textit{$s$-linear resolution} if the graded minimal free resolution of $M$ is of the form 
\begin{equation}
0 \longrightarrow A(-s-p)^{\beta_p} \longrightarrow \  \dots \ \longrightarrow A(-s-1)^{\beta_1} \longrightarrow A(-s)^{\beta_0} \longrightarrow M \longrightarrow 0.
\end{equation}
In other words $M$ has $s$-linear resolution if $\beta_{i,i+j} =0$ for $j \ne s$; a relevant notion is that of $s$-linear quotient (see \cite{HH, AAN}). 

Next, we give some definitions and nomenclature to set the conditions of Theorem \ref{thm:linearity} presented in the following. First, let assume that $<$ is a monomial order on $A$. Write a non-zero polynomial in $A$ as, $f = \sum_v a_vv$, where $v \in A$ is a monomial. The initial monomial of $f$ w.r.t. $<$ is the biggest monomial w.r.t. $<$ among the monomials belonging to $supp(f)$. We write $in_<(f)$ for the initial monomial of $f$ with respect to $<$. The leading coefficient of $f$ is the coefficient of $in_<(f)$. If $I$ is a nonzero ideal of $A$, the initial ideal of $I$ w.r.t $<$ is the monomial ideal of $A$, which is generated by $\{in_<(f): f \in I \}$. We write $in_<(I)$ for the initial ideal of $I$. Thus, we are ready to present the following theorem, which characterizes the linear resolution property.
\begin{theorem}[\cite{RV}] \label{thm:linearity}
Assume $I= \bigoplus_k I_k$ is a graded Cohen-Macaulay ideal in $A$ of codimension $q$ and $s$ is the smallest index that $I_s \ne 0$. The followings are equivalent
\begin{itemize}
\item $I$ has $s$-linear resolution.
\item If $a_1, \dots , a_{n-q}$ is a maximal $(A/I)$-regular sequence of linear forms in $A$. we have $\overline{I}=\overline{\mathfrak{m}^s}$ in $A/(a_1, \dots ,a_{n-q})$.
\item The Hilbert function of $I$ has the form $H(I,s)={s+q-1 \choose s}$.
\item Suppose $< $ is a monomial order on $A$. If $\ in_{<}(I) \ $ (the initial ideal of $I$) is Cohen-Macaulay, then $\ in_{<}(I) \ $ has $s$-linear resolution. 
\end{itemize}
\end{theorem}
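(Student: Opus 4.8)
The plan is to prove the cycle of implications among the four stated conditions, relying on the standard machinery for Cohen--Macaulay graded ideals together with the behaviour of Betti numbers under reduction modulo a regular sequence of linear forms. First I would reduce to the Artinian case: since $a_1,\dots,a_{n-q}$ is a maximal $(A/I)$-regular sequence of linear forms, the quotient $\bar A = A/(a_1,\dots,a_{n-q})$ is a polynomial ring in $q$ variables, $I$ has codimension $q$, and $\overline{I} = I\bar A$ satisfies $\beta_{ij}^A(I) = \beta_{ij}^{\bar A}(\overline I)$ because tensoring a minimal free resolution with $\bar A$ along a regular sequence preserves minimality (the entries of the differentials stay in the maximal ideal). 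Hence $I$ has an $s$-linear resolution over $A$ if and only if $\overline I$ does over $\bar A$, and the Hilbert function statement is likewise unchanged in the relevant degree since a regular sequence of linear forms of length $n-q$ cuts the Hilbert \emph{series} by $(1-t)^{n-q}$, fixing the leading term of the Hilbert polynomial and the value $H(I,s)$.

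For the implication from $s$-linear resolution to $\overline I = \overline{\mathfrak m^s}$: in $\bar A$, the ideal $\overline I$ is generated in degree $s$ (this is the $\beta_{0,j}=0$ for $j\neq s$ part) and has projective dimension $q-1$ by Auslander--Buchsbaum, so $\bar A/\overline I$ has depth $1$ hence, being Artinian... — more precisely one uses that a minimal $s$-linear resolution of length $q-1$ over a polynomial ring in $q$ variables forces, by a Hilbert-series computation from the resolution, $\dim_k \overline I_s$ to equal exactly $\binom{s+q-1}{q-1} = \binom{s+q-1}{s} = \dim_k \mathfrak m^s \bar A_s$, and since $\overline I_s \subseteq \bar A_s = \overline{\mathfrak m^s}_s$ with equal dimension and both generate in degree $s$, we get $\overline I = \overline{\mathfrak m^s}$. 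The converse is immediate because $\overline{\mathfrak m^s}$ (the $s$-th power of the maximal ideal in a polynomial ring) is well known to have an $s$-linear resolution, namely the Eagon--Northcott / truncated Koszul complex. The equivalence with the Hilbert function condition $H(I,s) = \binom{s+q-1}{s}$ is then read off from the same dimension count, noting that $s$ is by hypothesis the initial degree of $I$, so $\overline I_j = 0$ for $j<s$ and the single value at $j=s$ determines whether $\overline I$ fills up $\overline{\mathfrak m^s}$.

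Finally, for the last bullet comparing $I$ with its initial ideal $\mathrm{in}_<(I)$: one uses the standard facts that $\dim_k(A/I)_j = \dim_k(A/\mathrm{in}_<(I))_j$ for all $j$ (so the two share the same Hilbert function, hence $I$ and $\mathrm{in}_<(I)$ share the same codimension $q$ and the same value $H(\,\cdot\,,s)$), and that $\beta_{ij}(I) \le \beta_{ij}(\mathrm{in}_<(I))$ in general. Thus if $\mathrm{in}_<(I)$ is Cohen--Macaulay then it falls under the equivalence already proved: its Hilbert function in degree $s$ equals $\binom{s+q-1}{s}$, so $\mathrm{in}_<(I)$ has $s$-linear resolution; and then the Betti-number inequality forces $I$ to have $s$-linear resolution as well. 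The main obstacle I anticipate is the bookkeeping in the first paragraph — verifying carefully that passing to $\bar A$ preserves both minimality of the resolution and the pertinent value of the Hilbert function — since everything else reduces to invoking the Eagon--Northcott resolution of a power of the maximal ideal and the semicontinuity of Betti numbers under taking initial ideals.
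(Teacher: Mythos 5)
The paper never proves Theorem \ref{thm:linearity}: it is quoted verbatim from \cite{RV} as background, so there is no internal argument to measure you against; your proposal has to stand on its own, and in outline it does. The reduction modulo a maximal $(A/I)$-regular sequence of linear forms is legitimate (the $a_i$ are regular on $I\subseteq A$ as well, so tensoring the minimal resolution with $\bar A=A/(a_1,\dots,a_{n-q})\cong k[y_1,\dots,y_q]$ stays exact and minimal, and $\overline I\cong I/(a_1,\dots,a_{n-q})I$), and the rest of the plan — $\overline{\mathfrak m}^s$ has a linear resolution, equal Hilbert functions for $I$ and $\mathrm{in}_<(I)$, and $\beta_{ij}(I)\le\beta_{ij}(\mathrm{in}_<(I))$ — is the standard route. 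Three places deserve tightening. First, in the step from $s$-linear resolution to $\overline I=\overline{\mathfrak m}^s$, say explicitly what the ``Hilbert-series computation'' is: the resolution gives a numerator of degree at most $s+q-1$ for the Hilbert series of $\bar A/\overline I$, which is divisible by $(1-t)^q$ because $\bar A/\overline I$ is Artinian, so the Hilbert series is a polynomial of degree at most $s-1$, i.e.\ $(\bar A/\overline I)_j=0$ for $j\ge s$ (equivalently, argue via regularity or the socle degree $s-1$); the Betti numbers themselves are not determined a priori, so the vanishing in degrees $\ge s$ is what you actually need. Second, for the Hilbert-function condition you cannot assume $\overline I$ is generated in degree $s$; the correct argument is the two containments $\overline{\mathfrak m}^s\subseteq\overline I$ (from $\dim_k\overline I_s=\dim_k\bar A_s$) and $\overline I\subseteq\overline{\mathfrak m}^s$ (from $\mathrm{indeg}(I)=s$, since $I\subseteq\mathfrak m^s$), which your sketch only gestures at. Third, item (4) as printed is a conditional and is vacuous if no order makes $\mathrm{in}_<(I)$ Cohen--Macaulay; what you prove — that the other conditions imply (4), and that (4) together with Cohen--Macaulayness of some initial ideal gives back the $s$-linear resolution of $I$ via the Betti inequality — is the substantive content, and it is worth stating that this is how you are reading the (imprecisely phrased) fourth condition.
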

Now let us suppose that $I$ is a graded ideal of $A$ and $in_<(I)$ is Cohen-Macaulay, then from the above Theorem, it follows that $I$ has linear resolution if and only if $in_<(I)$ has a linear resolution, cf. \cite{RV} loc. Cit. A relevant definition is that of linear quotients as follows. 
\begin{definition} \label{def:colon-ideals}
Say $M$ has linear quotient, if there exists a minimal system of generators $a_1, \dots ,a_m$ of $M$ such that the colon ideals
\begin{equation} 
J_l=(a_1, \dots ,a_{l-1}):a_l
\end{equation}
are generated by linear forms of all $l$. 
\end{definition}
On the other hand, the following result relative to linear quotients is also well known.
\begin{theorem} (\cite{HH} prop. 8.2.1) \label{thm:linear-quotient}
Suppose that $I$ is a graded ideal of $A$ generated in degree $s$. If $I$ has linear quotients, then it has a $s$-linear resolution.
\end{theorem}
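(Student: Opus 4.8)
The plan is to argue by induction on the number $m$ of elements in a minimal generating set $a_1,\dots,a_m$ of $I$ witnessing the linear quotients property, all of degree $s$. The base case $m=1$ is immediate: $I=(a_1)\cong A(-s)$ is free of rank one with generator in degree $s$, hence has $s$-linear resolution. For the inductive step I would set $I'=(a_1,\dots,a_{m-1})$. Deleting a generator from a minimal generating set preserves minimality, and the colon ideals $J_l=(a_1,\dots,a_{l-1}):a_l$ for $l\le m-1$ are unchanged, so $I'$ again has linear quotients and, having one fewer generator, satisfies the inductive hypothesis: $I'$ has $s$-linear resolution. The second input is the colon ideal $J_m=I':a_m$, which by assumption is generated by linear forms; replacing these by a basis of their linear span and applying a linear change of coordinates identifies $J_m$ with $(x_1,\dots,x_c)$ for some $c\ge 0$, whose minimal free resolution is the Koszul complex. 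Thus $J_m$, and with it $A/J_m$, has a $1$-linear resolution; in particular $\mathrm{Tor}_p(A/J_m,k)$ is concentrated in internal degree $p$ for every $p\ge 0$.

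Next I would use the short exact sequence of graded modules
\[
0\longrightarrow (A/J_m)(-s)\xrightarrow{\ \cdot a_m\ } A/I'\longrightarrow A/I\longrightarrow 0,
\]
coming from the isomorphism $I/I'=(I'+(a_m))/I'\cong (A/(I':a_m))(-s)$ induced by multiplication by $a_m$, a map of internal degree $s$. Tensoring with $k$ over $A$ and reading the long exact sequence in $\mathrm{Tor}$ in a fixed internal degree $j$ gives, for each homological degree $i\ge 1$, the exact strand
\[
\mathrm{Tor}_i(A/I',k)_j\longrightarrow \mathrm{Tor}_i(A/I,k)_j\longrightarrow \mathrm{Tor}_{i-1}\big((A/J_m)(-s),k\big)_j .
\]
By the inductive hypothesis the left-hand term vanishes unless $j=i+s-1$, and by the $1$-linearity of $A/J_m$ together with the shift by $s$ the right-hand term vanishes unless $j=(i-1)+s=i+s-1$. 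Hence $\mathrm{Tor}_i(A/I,k)_j=0$ whenever $i\ge 1$ and $j\ne i+s-1$, which, since $\mathrm{Tor}_i(A/I,k)_j=\mathrm{Tor}_{i-1}(I,k)_j$ for $i\ge 1$ and $I$ is generated in degree $s$, is precisely the statement that $I$ has $s$-linear resolution.

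The genuinely routine ingredients are the two standard facts invoked above, namely that an ideal generated by linear forms has the Koszul complex as its minimal resolution, and that a module with a $c$-linear resolution has each $\mathrm{Tor}_p$ concentrated in a single internal degree which is merely translated by a degree shift, together with a quick check of the degenerate situations: $J_m=0$, properness of $J_m$ forced by minimality of the generators, and the harmless degree-zero corner once one notes $s\ge 1$, the case $s=0$ giving $I=A$. The one step I expect to require real care is the internal-degree bookkeeping in the $\mathrm{Tor}$ long exact sequence: one must verify that the homological shift produced by the connecting homomorphism exactly compensates the internal shift by $s$, so that the contribution of $A/J_m$ lands on the same linear strand $j=i+s-1$ as the contribution of $A/I'$ rather than one row off it. Once this alignment is confirmed the sandwiching closes the induction. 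I would also remark that iterating these short exact sequences, equivalently forming iterated mapping cones over the Koszul resolutions of the $J_l$, yields an explicit, in general non-minimal, free resolution of $I$ with all maps represented by matrices of linear forms, giving a second, constructive route to the same conclusion.
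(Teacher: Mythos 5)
Your proof is correct: the induction on the number of generators, the exact sequence $0\to (A/J_m)(-s)\xrightarrow{\,a_m\,}A/I'\to A/I\to 0$, and the internal-degree bookkeeping in the long exact Tor sequence constitute exactly the standard mapping-cone argument for this statement, which the paper itself does not reprove but cites from \cite{HH} (Prop.\ 8.2.1). This is also essentially the same inductive colon-ideal/Tor-sequence technique the paper uses for its graded generalization in Section \ref{sec:main_res}, and your treatment of the degenerate cases ($J_m=0$, properness of $J_m$ via minimality, $s=0$) covers the only points needing care, so nothing further is required.
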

Another important concept related to linearity is \textit{the componentwise linearity}, \cite{HH}. For that, we consider the following construction. Assume $M=\bigoplus_j M_j$ is a finitely generated graded $A$-module and let $(\mathfrak{F}, d)= \left ( (\mathfrak{F}_n), d_n:\mathfrak{F}_n \to \mathfrak{F}_{n-1} \right )$ be the minimal graded free resolution of $M$. Define subcomplexes 
\begin{equation} 
\mathfrak{F}^{(i)}:=\big (\ \mathfrak{m}^{i-n}\mathfrak{F}_n, \ d_n \ \big ).
\end{equation}
The linear part of $\mathfrak{F}$ is the complex 
\begin{equation}
\mathfrak{F}^{\text{lin}}=\big (\ \bigoplus_i \mathfrak{F}^{(i)}/\mathfrak{F}^{(i+1)},\ gr^{(i)}(d)\ \big )
\end{equation}
Also, we have an important definition and theorem regarding the graded module $M=\oplus_j M_j$. These are presented next.
\begin{definition}
The graded module $M=\oplus_j M_j$ is said to be componentwise linear if $M_{\langle s \rangle}$ has linear resolution for all $s$, where by definition $ M_{\langle s \rangle}$ is the submodule generated by $M_s$. 
\end{definition}
\begin{theorem}[\cite{NR}, \cite{CNR}] \label{thm:componentwise-lin}
The followings are equivalent
\begin{itemize}
\item $M$ is componentwise linear.
\item $M_{\langle q \rangle}$ has a linear resolution for every $q$.
\item $\mathfrak{F}^{\text{lin}}$ is acyclic.
\end{itemize}
\end{theorem}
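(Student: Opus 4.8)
The plan is to note first that the equivalence of the first two bullet points is nothing but the definition of componentwise linearity, so the entire content is the equivalence ``$M$ componentwise linear $\iff$ the linear part $\mathfrak F^{\mathrm{lin}}$ of the minimal graded free resolution $\mathfrak F$ of $M$ is acyclic.'' I would treat the two implications separately.

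For ``componentwise linear $\Rightarrow$ $\mathfrak F^{\mathrm{lin}}$ acyclic'' I would induct on the number $t+1$ of distinct degrees $q_0 < q_1 < \dots < q_t$ in which $M$ has minimal generators, proving the stronger statement that $\mathfrak F^{\mathrm{lin}}$ is a direct sum of shifted linear complexes, one for each generator degree. The base case $t=0$ is trivial: then $M = M_{\langle q_0\rangle}$ has a $q_0$-linear resolution, so $\mathfrak F = \mathfrak F^{\mathrm{lin}}$. For the step, set $N = M_{\langle q_0\rangle}$, the submodule generated by the lowest-degree generators; by hypothesis $N$ has a $q_0$-linear resolution $\mathbf F^N = (\mathbf F^N)^{\mathrm{lin}}$, and $M/N$ has minimal generators only in degrees among $q_1, \dots, q_t$. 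Assuming $M/N$ is again componentwise linear, the induction hypothesis gives that its minimal resolution $\mathfrak G$ is a direct sum of shifted linear strands, so its $i$-th free module is generated in degrees $\ge q_1 + i$. Now resolve $M$ by the mapping cone of the comparison chain map $\psi$ attached to the short exact sequence $0 \to N \to M \to M/N \to 0$, which in homological degree $i$ sends the $i$-th free module of $\mathfrak G$ into the $(i-1)$-st free module of $\mathbf F^N$ (cf.\ \cite{HH}). Since the former is generated in degrees $\ge q_1 + i$ and the latter lives in the single degree $q_0 + i - 1$, every entry of $\psi$ has degree $\ge q_1 - q_0 + 1 \ge 2$; hence the mapping cone is minimal, $\psi$ contributes nothing to the linear part, and $\mathfrak F^{\mathrm{lin}} = \mathbf F^N \oplus \mathfrak G^{\mathrm{lin}}$ is again a direct sum of shifted linear complexes --- in particular acyclic. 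For the converse, I would run this backwards: from an acyclic $\mathfrak F^{\mathrm{lin}}$ one isolates, for each $q$, the subcomplex carrying internal degrees $\le q$ and shows --- using that $M_{\langle q\rangle}$ is generated in the single degree $q$ and that $H_i(\mathfrak F^{\mathrm{lin}}) = 0$ for $i \ge 1$ --- that it is a linear free resolution of $M_{\langle q\rangle}$.

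The genuine obstacle is the inductive claim that $M/M_{\langle q_0\rangle}$ is componentwise linear whenever $M$ is. This is not formal: $(M/M_{\langle q_0\rangle})_{\langle q\rangle}$ is the quotient of $M_{\langle q\rangle}$ by the a priori badly behaved submodule $M_{\langle q\rangle} \cap M_{\langle q_0\rangle}$, and to conclude that it still has a linear resolution one must exploit the linearity of the resolution of $M_{\langle q_0\rangle}$, typically via the long exact $\mathrm{Tor}$ sequence of the relevant short exact sequence together with a regularity bound such as $\operatorname{reg}(M/M_{\langle q_0\rangle}) = q_t$. Once this is in hand, the mapping-cone bookkeeping above is routine. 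If that point proves delicate, an alternative is to bypass the induction and argue directly with Betti numbers, comparing $\beta_{i,i+j}(M)$ with the Betti numbers of $M_{\langle j\rangle}$ and of $\mathfrak m M_{\langle j-1\rangle}$; componentwise linearity is equivalent to the resulting equalities, and these force $H_i(\mathfrak F^{\mathrm{lin}}) = 0$ for all $i \ge 1$. I would fall back on that route if needed.
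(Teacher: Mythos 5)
First, note that the paper does not prove this statement at all: Theorem \ref{thm:componentwise-lin} is quoted as background from \cite{NR} and \cite{CNR}, so there is no in-paper argument to compare yours against; your attempt has to be judged against the standard proofs (R\"omer/Yanagawa, as surveyed in \cite{CNR}). Your reduction of the first two bullets to the definition is right (with the paper's definition they are literally the same statement), and your mapping-cone bookkeeping in the forward direction is sound: since any minimal resolution of a module generated in degrees $\ge q_1$ has $i$-th Betti degrees $\ge q_1+i$, the comparison maps into the $q_0$-linear resolution of $N=M_{\langle q_0\rangle}$ automatically have entries of degree $\ge q_1-q_0+1\ge 2$, so the horseshoe/cone resolution is minimal and $\mathfrak F^{\mathrm{lin}}=\mathbf F^N\oplus\mathfrak G^{\mathrm{lin}}$. (You do not even need the ``direct sum of shifted linear strands'' strengthening for that step.)

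However, there are two genuine gaps. The forward direction rests entirely on the lemma that $M/M_{\langle q_0\rangle}$ is again componentwise linear; you flag it, but the tools you propose do not close it. Writing $(M/N)_{\langle q\rangle}\cong M_{\langle q\rangle}/\mathfrak m^{\,q-q_0}N$ and running the long exact Tor sequence of $0\to\mathfrak m^{\,q-q_0}N\to M_{\langle q\rangle}\to (M/N)_{\langle q\rangle}\to 0$ only confines the extra Tor of $(M/N)_{\langle q\rangle}$ to internal degree $i+q-1$; killing it requires the maps $\mathrm{Tor}_{i-1}(\mathfrak m^{\,q-q_0}N,k)\to\mathrm{Tor}_{i-1}(M_{\langle q\rangle},k)$ to be injective (vanishing of the connecting maps), and no regularity bound such as $\operatorname{reg}(M/N)=q_t$ gives that. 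This injectivity is essentially of the same depth as the theorem itself and must be supplied (it can be, following \cite{NR} or the Herzog--Hibi treatment, but it is not in your proposal). Second, the converse direction is asserted rather than proved: the $q$-linear strand of an acyclic $\mathfrak F^{\mathrm{lin}}$ is a linear resolution of \emph{its own} $H_0$, which is in general not $M_{\langle q\rangle}$ (already in your own forward construction the upper strand is the linear part of a resolution of $M/N$, not of $M_{\langle q_1\rangle}$); identifying that homology and passing from it to linearity of $M_{\langle q\rangle}$ (for instance via the relation of the strands to $M_{\langle q\rangle}/\mathfrak m M_{\langle q-1\rangle}$ or the Betti-number identities you mention as a fallback) is the substantive content of this implication and is missing. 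So the proposal is a reasonable outline of the known strategy, but as written both implications have unproved core steps.
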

One can choose a homogeneous basis for the $A$-graded modules $\mathfrak{F}^{(i)}$, (for all $i$) so that matrices present the differentials with homogeneous entries. Then, the differential of the complex $\mathfrak{F}^{\text{lin}}$ is obtained by killing all the monomials of degrees bigger than one in the entries. The resulting matrix is called \textit{the linearization of differentials}.
\subsection{Broken circuit complexes}
One can consider the above properties for certain ideals constructed from combinatorial data of a matroid. Let $(X, <)$ be an ordered simple matroid of rank $r$ on $[n]$. The minimal dependent subsets of $X$ are called circuits. Denote by $BC_<(X)$ the broken circuit complex of $X$, i.e., the collection of all subsets of $M$ not containing any broken circuit. It is known that, $BC_<(X)$ is an $(r-1)$-dimensional shellable complex. The Stanley-Reisner ideal of $BC_<(X)$ denoted $I_<(X)$ is the ideal generated by all monomials on broken circuits. We denote a broken circuit by $bc_<(C) =C \smallsetminus min_<(C)$, \cite{RV}. By definition, 
\begin{equation} 
I_<(X)=\big \langle x_{bc_<(C)}:=\prod_{j \in bc_<(C)}x_j \big \rangle.
\end{equation}
The shellability of $BC_<(X)$ implies that $A/I_<(X)$ is Cohen-Macaulay. Thus, the following theorem characterizes the linear resolution of the ideal $I_<(X)$.
\begin{theorem}[\cite{RV}] \label{thm:decomposit-lin}
The following are equivalent
\begin{itemize}
\item $I_<(X)$ has $s$-linear resolution.
\item $X =U_{s,n-r+s} \bigoplus U_{r-s,r-s}$.
\item All powers of $I_<(X)$ have linear resolutions.
\end{itemize}
\end{theorem}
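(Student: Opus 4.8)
The plan is to prove the cycle of implications $(1) \Rightarrow (2) \Rightarrow (3) \Rightarrow (1)$, using the general characterization of the linear resolution property from Theorem \ref{thm:linearity} together with the shellability (hence Cohen-Macaulayness) of $BC_<(X)$. First I would record that, since $BC_<(X)$ is an $(r-1)$-dimensional shellable complex on $[n]$, the Stanley-Reisner ring $A/I_<(X)$ is Cohen-Macaulay of codimension $q = n - r$, and $I_<(X)$ is generated in the single degree $s$ equal to the minimal size of a broken circuit. Thus Theorem \ref{thm:linearity} applies verbatim, and in particular $I_<(X)$ has $s$-linear resolution if and only if $H(I_<(X), s) = \binom{s+q-1}{s} = \binom{s + n - r - 1}{s}$, equivalently, after reducing modulo a maximal regular sequence of linear forms, $\overline{I_<(X)} = \overline{\mathfrak{m}^s}$ in the Artinian reduction.

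For $(1) \Rightarrow (2)$: assuming the $s$-linear resolution, I would translate the Hilbert-function condition $H(A/I_<(X), s) = \binom{r}{s}$ (the complement of the bound above, using that the total dimension in degree $s$ is $\binom{n-r+s-1}{s}$... more precisely using the $f$-vector of $BC_<(X)$) into a statement about the $f$-vector of the broken circuit complex. Here the key external lower bound \eqref{eq:2} for the $f$-vector of a shellable complex — valid because all $s$-element subsets lie in $\Delta = BC_<(X)$ exactly when no broken circuit has fewer than $s$ elements — forces equality in \eqref{eq:2}, and by the cited result of \cite{B} equality in that bound is equivalent to the decomposition $BC_<(X) = \Delta(U_{s, n-r+s}) * \Delta(U_{r-s, r-s})$ of the complex as a join of the independence complexes of the two uniform matroids; translating back through the one-to-one correspondence between matroids and their broken circuit complexes (for a fixed compatible order) yields $X = U_{s, n-r+s} \oplus U_{r-s, r-s}$. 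Conversely $(2) \Rightarrow (1)$: for the explicit uniform-matroid decomposition one computes $I_<(X)$ directly — it becomes the ideal generated by all squarefree monomials of degree $s$ in the $n - r + s$ "non-coloop" variables — and such an ideal is well known to have a linear resolution (it is a squarefree Veronese-type / matroidal ideal, and one can also verify the Hilbert-function criterion of Theorem \ref{thm:linearity} by a direct binomial count).

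For the equivalence with $(3)$: the implication $(3) \Rightarrow (1)$ is trivial since $I_<(X) = (I_<(X))^1$. For $(1) \Rightarrow (3)$, having established $(1) \Leftrightarrow (2)$, I would use the explicit form of $I_<(X)$ in the decomposed case: since $I_<(X)$ is (up to the irrelevant coloop variables) the squarefree Veronese ideal generated by all degree-$s$ squarefree monomials in $m = n-r+s$ variables, every power $I_<(X)^t$ is a polymatroidal ideal (powers of polymatroidal ideals are polymatroidal), and polymatroidal ideals have linear quotients, hence linear resolutions by Theorem \ref{thm:linear-quotient}. Alternatively one invokes directly the theorem of Herzog–Hibi that a squarefree Veronese ideal has all powers with linear resolution. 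The main obstacle I anticipate is the careful bookkeeping in $(1) \Rightarrow (2)$: matching the Hilbert function / $f$-vector data of $A/I_<(X)$ to the precise summation in \eqref{eq:2}, checking that the hypothesis "all $s$-element subsets belong to $\Delta$" is exactly the statement that $s$ is the minimal broken-circuit size, and then invoking the equality case of \cite{B} in a way that respects the ordered-matroid structure (so that the abstract join decomposition of complexes genuinely lifts to a direct-sum decomposition of matroids, not merely of simplicial complexes).
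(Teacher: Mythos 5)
The paper itself does not prove this theorem: it is quoted from \cite{RV}, and the only supporting material supplied is the statement and sketch of Bj\"orner's bound (Theorem \ref{thm:inequality-fvector}), whose equality case concerns the independence complex $In(X)$. Your overall plan --- shellability gives Cohen--Macaulayness, Theorem \ref{thm:linearity} converts $s$-linearity into a Hilbert-function condition, the converse direction by identifying $I_<(X)$ with a squarefree Veronese (matroidal) ideal, and the statement on powers via polymatroidal ideals, linear quotients and Theorem \ref{thm:linear-quotient} --- is the route the paper gestures at, and your implications $(2)\Rightarrow(1)$, $(2)\Rightarrow(3)$ and $(3)\Rightarrow(1)$ are essentially sound, up to minor slips: $I_<(X)$ is not generated in a single degree for a general ordered matroid (the paper's own example has generators in degrees $2,3,4$), and in the decomposed case the Veronese ideal lives in $n-r+s-1$ variables, because the $<$-smallest element of the uniform summand is a cone point of $BC_<(X)$ along with the coloops.

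The genuine gap is in $(1)\Rightarrow(2)$. The equality case you invoke, part (2) of Theorem \ref{thm:inequality-fvector}, is a statement about $In(X)$: the quantities $I_k$ count independent $k$-subsets, and the hypothesis that all $s$-subsets are faces means every circuit has at least $s+1$ elements. What the linear-resolution hypothesis controls, through the Hilbert function of $A/I_<(X)$, is the $f$- and $h$-vector of the broken circuit complex $BC_<(X)$, which is a different complex: $BC_<(X)\subseteq In(X)$, their $h$-polynomials are $T_X(x,0)$ and $T_X(x,1)$ respectively, and the coefficientwise comparison between these goes the wrong way, so the vanishing $h_i(BC_<(X))=0$ for $i\ge s$ that linear resolution gives does not imply the vanishing of the corresponding entries of the $h$-vector of $In(X)$ that the equality case requires. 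Moreover, your hypothesis check is off by one when applied to $BC_<(X)$ (if the smallest broken circuit has $s$ elements, only all $(s-1)$-subsets are faces of $BC_<(X)$), and the claim that equality in \eqref{eq:2} for a shellable complex forces a join decomposition into independence complexes of uniform matroids is not what \cite{B} asserts; it is even false for general Cohen--Macaulay complexes whose Stanley--Reisner ideal has a linear resolution (the path on four vertices is such a complex and is not a join of a simplex with a skeleton). So the crucial bridge from the broken-circuit data to the matroid decomposition --- precisely where \cite{RV} does its real work, using structure and factorization results specific to broken circuit complexes (cf. \cite{BO}, \cite{BZ}) rather than a direct appeal to the $In(X)$ equality case --- is missing from your argument; the ``careful bookkeeping'' you flag at the end is in fact a substantive step that needs its own proof.
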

The matroid $U_{p,n}$ is the uniform matroid on the $n$-element, whose independent sets are subsets having at most $p$ elements. $U_{p,n}$ has rank $p$ and its circuits have $p+1$ elements. In particular, $U_{n,n}$ has no dependent sets and is called free. The proof of the Theorem \ref{thm:decomposit-lin} uses [theorem 7.5.2 of reference \cite{B}] on an external bound for the $f$-vectors; please refer to \cite{RV}. For the sake of completeness, we briefly recall this fact from \cite{B} in the following theorem.
\begin{theorem}[\cite{B}]\label{thm:inequality-fvector}
Let us consider the following.
\begin{itemize} 
\item[(1)] Let $\Delta$ be an $r-1$ dimensional shellable complex on the vector set $S$ of size $|S|=n$. If for some $s \in \mathbb{N}$, all $s$-element subsets of $S$ belong to $\Delta$, then the inequality
\begin{equation} \label{eq:inequality}
f_{k-1}\geq \sum_{i=0}^{s-1} {n-r+i-1 \choose i}  {r-i  \choose k-i}, \qquad k=0, \dots ,r  
\end{equation}
holds for the $f$-vector. 
\item[(2)] Assume $\Delta=In(X)$ is the complex of independent subsets of a loopless matroid. Then, inequality \eqref{eq:inequality} holds for $f_k$ replaced with the $I_k$, the number of independent $k$-element subsets of $X$. The equality holds if and only if 
\begin{equation}
X =U_{s,n-r+s} \bigoplus U_{r-s,r-s}, \qquad s+1=\min \{|C|\ ; C \ \text{is a circuit}\}.
\end{equation}
\end{itemize}
\end{theorem}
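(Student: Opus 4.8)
The plan is to derive both statements from the relation between the $f$-vector and the $h$-vector of a shellable complex, together with a rigidity argument for matroids whose independence numbers are extremal. For part (1), recall the mutually inverse relations $f_{k-1}=\sum_{i=0}^{k}\binom{r-i}{k-i}h_i$ and $h_k=\sum_{i=0}^{k}(-1)^{k-i}\binom{r-i}{k-i}f_{i-1}$ valid for a pure $(r-1)$-dimensional complex. First I would use that a shellable complex is Cohen--Macaulay, so $h_i\ge 0$ for all $i$. The hypothesis that every $s$-element subset of $S$ is a face pins down $f_{i-1}=\binom{n}{i}$ for $i\le s$, and a routine binomial manipulation of the inversion formula then gives $h_i=\binom{n-r+i-1}{i}$ for $i\le s$ (the number of monomials of degree $i$ in $n-r$ variables). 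Substituting these known values into $f_{k-1}=\sum_i\binom{r-i}{k-i}h_i$ and discarding the remaining nonnegative summands (those with $i>s$) yields \eqref{eq:inequality}. For $k\le s$ nothing is discarded, so \eqref{eq:inequality} is automatically an equality there; and reading the relation at $k=r$ shows that equality in \eqref{eq:inequality} (for every $k$) is equivalent to $h_i=0$ for all $i>s$.

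For part (2), I would first invoke the classical fact that the independence complex $\mathrm{In}(X)$ of a matroid is shellable, and observe that ``all $s$-element subsets are independent'' is exactly $\min\{|C|\}\ge s+1$; taking $s=\min\{|C|\}-1$ and applying part (1) gives the displayed bound for the $I_{k-1}$. For the sufficiency of the decomposition, compute $h_k(\mathrm{In}(U_{p,m}))=\binom{m-p+k-1}{k}$ for $k\le p$ and $0$ otherwise, and use that $\mathrm{In}(M_1\oplus M_2)=\mathrm{In}(M_1)\ast\mathrm{In}(M_2)$ is a join, so that $h$-polynomials multiply; this makes the $h$-vector of $\mathrm{In}(U_{s,n-r+s}\oplus U_{r-s,r-s})$ equal to $(h_0,\dots,h_s,0,\dots,0)$ with $h_i=\binom{n-r+i-1}{i}$, and equality in \eqref{eq:inequality} then follows from the last sentence of the previous paragraph.

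Conversely, equality forces $h_i(\mathrm{In}(X))=0$ for $i>s$, i.e.\ $\deg h(\mathrm{In}(X);t)\le s$, and the task is to promote this numerical constraint to a structural one; this is where I expect the real work to lie. I would split off the coloops, writing $X=X'\oplus U_{c,c}$ with $X'$ coloopless of rank $r-c$; coning with a coloop leaves the $h$-polynomial unchanged, while the top $h$-coefficient of $\mathrm{In}(X')$ equals $T(X';0,1)$, which is positive precisely because $X'$ has no coloops, so $r-c=\deg h(\mathrm{In}(X))\le s$. Comparing this with $h_s(\mathrm{In}(X))=\binom{n-r+s-1}{s}>0$ (the degenerate case $n=r$ being trivial) forces $c=r-s$, hence $X=X'\oplus U_{r-s,r-s}$ with $X'$ coloopless of rank exactly $s$; but every $s$-subset of the ground set of $X'$ is independent in $X'$, and a rank-$s$ matroid with that property must be uniform, so $X'=U_{s,n-r+s}$ and $X=U_{s,n-r+s}\oplus U_{r-s,r-s}$, whose girth is then $s+1$ as required. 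An alternative route to this rigidity, closer to the treatment in \cite{B}, is an induction on $n-r$ using deletion and contraction together with $I_{k-1}(X)=I_{k-1}(X\smallsetminus e)+I_{k-2}(X/e)$; in either approach, the crux --- and the main obstacle --- is precisely this passage from extremality of the $f$-vector to an isomorphism of matroids.
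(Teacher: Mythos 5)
Your argument follows essentially the same route as the paper's sketch (after Bj\"orner): nonnegativity of the $h$-vector from shellability, the computation $h_i=\binom{n-r+i-1}{i}$ for $i\le s$, truncation of $f_{k-1}=\sum_i h_i\binom{r-i}{k-i}$ for the inequality, and in the extremal case the identification $T_X(x,1)=h_\Delta(x)$ to split off the coloops as $U_{r-s,r-s}$, with the remaining factor forced to be uniform --- the paper does this last step by counting bases, $b(N)=\binom{n-r+s}{s}$, while you use the equivalent observation that a rank-$s$ matroid all of whose $s$-subsets are independent is uniform. The only other differences are minor: you also spell out the sufficiency direction via join-multiplicativity of $h$-polynomials, and, like the paper's own sketch (``the first $s+1$ terms''), you effectively read the displayed bound with the sum running to $i=s$ rather than the stated $s-1$, which is the indexing needed for the equality case to match the decomposition.
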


We sketch some explanations related to the above theorem inspired from \cite{B}. The condition that all $s$-element subsets of $S$ are in $\Delta$, implies that $f_k={n \choose k},\ k=0,1,2,3, \dots ,s$. By using formal combinatorial identities, 
\begin{equation}
h_k=\sum_{i=0}^k(-1)^{i+k}{r-i \choose k-i}{n \choose i}={n-r+k-1 \choose k} , \qquad k \leq s .
\end{equation}
Therefore, the inequality in \eqref{eq:inequality} follows by singling out the first $s+1$ terms in  
\begin{equation}
f_k=\sum_{i=0}^kh_i  {r-i \choose k-i} , \qquad k=1, \dots ,r .
\end{equation}
In case $\Delta=In(X)$, let $b(X)=I_r$, then $b(X)={n-r+s \choose s}$ is the number of bases of $X$. The $h$-vector is given by 
\begin{equation} 
\underline{\textbf{h}}=\left (\ 1, n-r,\ \dots \ , {n-r+s-1 \choose s}\ \right )
\end{equation}
and the Tutte polynomial of $X$ is 
\begin{equation}
T_X(x,1)=h_{\Delta}(x)=x^r+(n-r)x^{r-1}+ \cdots + {n-r+s-2 \choose s-1}x^{r-s}.
\end{equation}
The form of the Tutte polynomial shows that, $X$ has exactly $n-r+s$ coloops. This forces $X=N \oplus U_{r-s,r-s}$ where $N$ is a matroid of rank $s$ on $n-r+s$ points. Now $b(N)=b(X)= {n-r+s \choose s}$. This forces that $N=U_{s,n-r+s}$. The equality in \eqref{eq:inequality} holds, if $h_c=h_{c+1}= \dots =0$, cf. \cite{B} loc. cit. 

\subsection{Complete intersection property}
In Section \ref{sec:applications}, we need to decide whether a graded ideal is a complete intersection. We recall some definitions from commutative algebra. For a commutative Noetherian local ring $A$, the depth of $A$ (the maximum length of a regular sequence in the maximal ideal of $A$), is at most the Krull dimension of $A$. The ring $A$ is called \textit{Cohen–Macaulay} if its depth is equal to its dimension. A regular sequence is a sequence $a_1, \dots a_n$ of elements of $\mathfrak{m}$ such that, for all $i$, the element $a_i$ is not a zero divisor in $A/(a_1, \dots a_n)$. A local ring $A$ is a Cohen–Macaulay ring if there exists a regular sequence $a_1, \dots a_n$ such that the quotient ring $A/(a_1, \dots a_n)$ is Artinian. In that case, $n=\dim (A)$. More generally, a commutative ring is called Cohen–Macaulay if it is Noetherian, and all of its localizations at prime ideals are Cohen–Macaulay. Besides, we say that a ring $A$ is a complete intersection if there exists some surjection $R \to A$, with $R$ a regular local ring, such that a regular sequence generates the kernel. Our goal is to identify the complete intersection property for the Stanley-Reisner ideal of broken circuit ideals of matroids. For that, let us present some definitions next. 

Let us assume that $(X,<)$ is an ordered matroid on $[n]$, and $\mathcal{C}(X)$ is the set of broken circuits of $X$. Let $I_<(X)$ be the Stanley-Reisner ideal of the broken circuit complex $BC_<(X)$. A subset $D \in \mathcal{C}(X)$ is called generating set if $\langle x_{bc_<(C)}; \ C \in D \rangle$ generates $I_<(X)$. It is clear from definition that $D$ is a generating set of $\mathcal{C}(X)$ iff $bc_<(C)$ for $C \in D$ containing minimal broken circuits of $X$. Denote by $L(D)$ the intersection graph of $D$, that is the vertex set of $L(D)$ is $D$ and the edges are pairs $(C,C')$ such that $C \cap C'= \emptyset$. Say $D$ is simple if whenever $C, C' \in D$ then either $C \cap C'=min_<(C)\ \text{or} \ min_<(C')$, and otherwise $C \cap C'=\emptyset$. 
\begin{theorem}[\cite{RV}] \label{thm:complete-int}
Assume $(X,<)$ be and simple ordered matroid. The following are equivalent.
\begin{itemize}
\item $I_<(X)$ is complete intersection.
\item The minimal broken circuits of $X$ are pairwise disjoint.
\item There exists a simple subset $D \subset \mathcal{C}(X)$ such that $\mathcal{C}(X)=\{L(D');\ D' \subset D\}$.
\end{itemize}
\end{theorem}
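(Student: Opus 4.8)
The plan is to prove Theorem \ref{thm:complete-int} by establishing the three implications in a cycle, relying on the structural description of broken circuits of a \emph{simple} ordered matroid and on standard facts about complete intersection monomial ideals. The key preliminary observation is that for a squarefree monomial ideal generated in degree $s$ — here $I_<(X)$ is generated by the monomials $x_{bc_<(C)}$ associated to the \emph{minimal} broken circuits — the complete intersection property is equivalent to the generators forming a regular sequence, which for monomials is the purely combinatorial condition that the generators be pairwise coprime (pairwise disjoint supports). This immediately handles the equivalence of the first two bullets: if $I_<(X)$ is a complete intersection then a minimal generating set is a regular sequence of monomials, hence the $x_{bc_<(C)}$ for $C$ ranging over minimal broken circuits must have pairwise disjoint supports, i.e.\ the minimal broken circuits are pairwise disjoint; conversely, pairwise disjoint monomial generators always form a regular sequence, so $I_<(X)$ is a complete intersection. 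The only subtlety is making sure that the set of \emph{minimal} broken circuits is exactly a minimal monomial generating set of $I_<(X)$, which follows from the remark in the paragraph preceding the theorem (a generating set $D$ must contain the minimal broken circuits, and minimality of the monomial generators forces equality of supports).

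Next I would prove the equivalence of the second and third bullets using the hypothesis that $(X,<)$ is simple. First assume the minimal broken circuits are pairwise disjoint. Take $D$ to be the set of circuits realizing the minimal broken circuits; I claim $D$ is simple and $\mathcal{C}(X) = \{ L(D') : D' \subseteq D\}$ — here I read the right-hand side as: every broken circuit of $X$ arises by ``combining'' a subset of the minimal ones in the manner encoded by the intersection graph $L(D)$. Because every broken circuit contains some minimal broken circuit, and the simplicity hypothesis forces the pairwise intersections of circuits in $\mathcal{C}(X)$ to be either a single minimal element or empty, the poset of broken circuits under inclusion is generated — via unions along the allowed overlaps — by the minimal ones; disjointness of the minimal broken circuits then makes this combination free, matching the description by subsets $D' \subseteq D$. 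Conversely, if such a simple $D$ exists with $\mathcal{C}(X) = \{L(D') : D' \subseteq D\}$, then the minimal broken circuits are exactly (the broken circuits of) the singletons $D' = \{C\}$ with $C \in D$, and the simplicity of $D$ together with the structure of $L(D)$ forces these to be pairwise disjoint.

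For the cycle back to the first bullet, I would close with (third) $\Rightarrow$ (first): given the simple generating set $D$ with $\mathcal{C}(X) = \{L(D') : D' \subseteq D\}$, the monomials $x_{bc_<(C)}$, $C \in D$, are pairwise coprime (by the simplicity of $D$ and disjointness extracted as above), they generate $I_<(X)$, and a system of pairwise coprime monomials is a regular sequence in $A$, so $I_<(X)$ is a complete intersection. The main obstacle I anticipate is the bookkeeping in the second-to-third bullet equivalence: one has to be careful that the combinatorial operation implicit in ``$\mathcal{C}(X) = \{L(D') : D' \subseteq D\}$'' genuinely corresponds to taking unions of minimal broken circuits along disjoint (or minimally overlapping) pieces, and that the simplicity hypothesis on $(X,<)$ is exactly what rules out ``unexpected'' broken circuits not obtained this way. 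Everything else reduces to the standard dictionary between complete intersections of monomial ideals, regular sequences, and coprimality of monomials, together with the basic fact that a minimal monomial generating set of $I_<(X)$ is given precisely by the minimal broken circuits.
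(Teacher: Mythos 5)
First, a remark on the comparison target: the paper does not prove Theorem \ref{thm:complete-int} at all --- it is stated in the Preliminaries as a quoted result from \cite{RV}, so your proposal can only be measured against the proof in that reference and on its own merits. Your handling of the equivalence of the first two items is correct and is the standard route: $I_<(X)$ is the Stanley--Reisner ideal of $BC_<(X)$, so its minimal monomial generators are exactly the monomials $x_{bc_<(C)}$ of the inclusion-minimal broken circuits (the minimal non-faces), and a monomial ideal is a complete intersection precisely when its minimal monomial generators are pairwise coprime (one direction because pairwise coprime monomials form a regular sequence, the other via the comparison of the number of generators with the height). Pairwise coprimality of these generators is literally pairwise disjointness of the minimal broken circuits, so the first equivalence is fine.

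The equivalence with the third item is where the actual content of the theorem lies, and there your argument is a restatement rather than a proof. Concretely: (i) you assert that ``the simplicity hypothesis forces the pairwise intersections of circuits in $\mathcal{C}(X)$ to be either a single minimal element or empty.'' Simplicity of the matroid only says there are no loops and no parallel elements (every circuit has at least three elements); two circuits of a simple matroid can intersect in arbitrarily many elements. The intersection condition you invoke is the \emph{definition} of the chosen set $D$ being simple, which is exactly what must be \emph{proved} for a suitable choice of circuits realizing the minimal broken circuits, not assumed for all of $\mathcal{C}(X)$. Note in particular that disjointness of the broken circuits $bc_<(C)=C\smallsetminus \min_<(C)$ gives no control over how the full circuits $C$ meet, since the deleted minimal elements can be shared; bridging that gap is part of what (2) $\Rightarrow$ (3) requires. (ii) The step you describe as ``bookkeeping'' --- that every circuit of $X$ is accounted for by subsets $D'\subseteq D$ via the intersection graph $L(D)$, with no unexpected circuits --- is the heart of the proof in \cite{RV} and needs genuine matroid arguments (circuit elimination along the shared minimal elements, together with the interaction with the ordering $<$), none of which appear in your sketch. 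As written, your proposal establishes (1) $\Leftrightarrow$ (2) and, modulo the reading of condition (3), the easy observation that a simple generating set $D$ forces the minimal broken circuits to be disjoint, but it does not prove the implication from (2) to (3).
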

Theorem \ref{thm:complete-int} gives a concrete combinatorial criteria to check the complete intersection property of Stanley-Reisner ideal of broken circuit complex of matroids. It can also be applied to graph matroids, see \cite{RV}.
\subsection{Hyperplane arrangements}
To an arrangement of hyperplanes $\mathcal{A}$ one naturally associates a matroid denoted by $X=X(\mathcal{A})$ with the ground set to be the same as $\mathcal{A}$ and the independent sets to be the independent hyperplanes in $\mathcal{A}$, i.e., those whose corresponding reflections as said above are independent. Assume that the matroid $X(\mathcal{A})$ is associated to an essential central hyperplane arrangement of $n$ hyperplanes $\mathcal{A}=\{H_1, \dots ,H_n\}$ in a vector space $V$ of dimension $r$ over the infinite field $k$. When $k=\mathbb{C}$, then the integral cohomology of the complement $V \smallsetminus \bigcup_i H_i$ is isomorphic to the Orlik-Solomon algebra of $A(\mathcal{A})$, which is the quotient of standard graded exterior algebra on $\mathbb{C}\langle e_1, \dots ,e_n \rangle$ by the Orlik-Solomon ideal $J(\mathcal{A})$ generated by the elements
\begin{equation}
\partial e_{i_1 \dots i_p}  =\ \sum_r\  (-1)^{r-1}e_{i_1} \dots \widehat{e_{i_r}} \dots e_{i_p},
\end{equation}
when $\{H_{i_1}, \dots ,H_{i_p}\}$  are dependent subsets of $\mathcal{A}$, i.e. the reflections $\alpha_{i_j}$ through $H_{i_j}$ are independent. Therefore,
\begin{equation}
A(\mathcal{A})=\frac{A}{J(\mathcal{A})}=\frac{\bigwedge \mathbb{Z}\ \langle e_1, \dots ,e_n \rangle}{\langle \partial e_{i_1 \dots i_p}\rangle} =H^*(V \smallsetminus \bigcup_i H_i, \mathbb{Z}).
\end{equation}
The Orlik-Terao algebra is a commutative analogue of Orlik-Solomon algebra. That is, instead of exterior algebra on $n$ vectors we work on $A=k[x_1, \dots ,x_n]$ and whenever we have a relation $\sum_j a_j\alpha_{i_j}=0$ we assume a relation $r=\sum_j a_j \alpha_{i_j}$ in a relation space $F(\mathcal{A})$. The Orlik-Terao ideal is generated by 
\begin{equation}
\partial {r} =\ \sum_j \ (-1)^{j-1}a_j x_{i_1} \dots \widehat{x_{i_j}} \dots x_{i_p}, \qquad \Leftarrow \qquad \left ( r=\sum_j a_j \alpha_{i_j} \right ).
\end{equation}
The quotient $C(\mathcal{A})=A/I(\mathcal{A})$ is called the Orlik-Terao algebra of the hyperplane arrangement. The Orlik-Solomon algebras and Orlik-Terao algebras of hyperplane arrangements are related to each other by an idea of deformation \cite{HH}. A relevant notion in this context is that of a \textit{Koszul algebra} defined next.
\begin{definition}[\cite{RV}] \label{def:koszul}
Suppose $L$ is either an exterior or a polynomial algebra over a field $k$. A graded algebra $B=L/I$ is called Koszul if $k$ has linear resolution over $B$.
\end{definition} 
The characterization of the Koszulness of the Orlik-Solomon algebra of hyperplane arrangement is one of the essential questions in this area. In this regard, we mention the following theorem.
\begin{theorem}[\cite{RV}] \label{thm:koszul-hyperplane}
Let $\mathcal{A}$ be an essential central arrangement of $n$ hyperplanes in dimension $r$, and assume $I(\mathcal{A})$ is complete intersection. Then the following are equivalent.
\begin{itemize}
\item[(1)] $A(\mathcal{A})$ is Koszul.
\item[(2)] $C(\mathcal{A})$ is Koszul. 
\item[(3)] $X(\mathcal{A})=U_{2,n-r+2} \bigoplus U_{r-2,r-2}$.
\item[(4)] $\mathcal{A}=\mathcal{A}_1 \times \mathcal{A}_2$ where $\mathcal{A}_1$ is generic central arrangement of $n-r+2$ hyperplanes in dimension $p$ and $\mathcal{A}_2$ is Boolian in dimension $r-2$.
\item[(5)] $I(\mathcal{A})$ has $2$-linear resolution.
\item[(6)] $J(\mathcal{A})$ has $2$-linear resolution.
\end{itemize}
\end{theorem}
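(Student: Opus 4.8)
The plan is to funnel all six conditions through the broken circuit complex of the underlying matroid $X=X(\mathcal{A})$. The first and main bridge is Gr\"obner degeneration: I would fix the linear order on the hyperplanes that realizes $X$ as an ordered matroid, take the corresponding degree reverse lexicographic monomial order $<$ on $A=k[x_1,\dots,x_n]$ (and the analogous order on the exterior algebra), and invoke the classical ``no broken circuit'' Gr\"obner basis computation to obtain $in_<(J(\mathcal{A}))=in_<(I(\mathcal{A}))=I_<(X)$. Since $BC_<(X)$ is shellable, $I_<(X)$ is Cohen--Macaulay, so the initial-ideal criterion of Theorem \ref{thm:linearity} (and the remark following it) applies: $I(\mathcal{A})$ has a $2$-linear resolution if and only if $I_<(X)$ does, and the same for $J(\mathcal{A})$. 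This yields $(5)\Leftrightarrow(6)\Leftrightarrow[\,I_<(X)\text{ has a }2\text{-linear resolution}\,]$, and, combining Theorem \ref{thm:complete-int} with upper semicontinuity of graded Betti numbers under degeneration, it also transfers the complete-intersection hypothesis on $I(\mathcal{A})$ to $J(\mathcal{A})$ and to the combinatorial statement that the minimal broken circuits of $X$ are pairwise disjoint.

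The second bridge is Theorem \ref{thm:decomposit-lin}. The initial degree of $I_<(X)$ equals $2$ precisely when the shortest circuit of $X$ has three elements, so applying that theorem with $s=2$ shows that $I_<(X)$ has a $2$-linear resolution if and only if $X=U_{2,n-r+2}\oplus U_{r-2,r-2}$; this is the equivalence of $(3)$ with $(5)$ and $(6)$. The third bridge is the matroid-to-arrangement dictionary: a matroid direct sum $X(\mathcal{A})=X_1\oplus X_2$ is realized by a product of arrangements $\mathcal{A}=\mathcal{A}_1\times\mathcal{A}_2$ with $X(\mathcal{A}_i)=X_i$, the uniform matroid $U_{2,m}$ is exactly the matroid of a generic essential central arrangement of $m$ hyperplanes, and $U_{p,p}$ is the matroid of the Boolean arrangement in dimension $p$. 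Together these give $(3)\Leftrightarrow(4)$.

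It remains to incorporate the Koszul conditions. One direction is immediate: an ideal generated in degree $2$ with a $2$-linear resolution has a Koszul quotient, so $(6)\Rightarrow(1)$ and $(5)\Rightarrow(2)$. For the converses I would use the standing hypothesis that $I(\mathcal{A})$ --- and hence, by the first bridge, also $J(\mathcal{A})$ --- is a complete intersection, together with the classical fact that a complete intersection quotient of a polynomial (respectively exterior) algebra is Koszul if and only if its defining ideal is generated by quadrics. Since $I(\mathcal{A})$ is minimally generated in the degrees $|C|-1$ as $C$ runs over the circuits of $X$, being a quadratically generated complete intersection forces every circuit to have exactly three elements; combined with the pairwise disjointness of the minimal broken circuits from the first bridge, this upgrades to $X=U_{2,n-r+2}\oplus U_{r-2,r-2}$, i.e.\ $(3)$, closing the cycle of implications.

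The hard part will be the first bridge. One must choose the monomial order so that the no-broken-circuit elements form a Gr\"obner basis of $J(\mathcal{A})$ and of $I(\mathcal{A})$ simultaneously, and then check that Cohen--Macaulayness of the common initial ideal $I_<(X)$ really does let $2$-linearity descend back to $I(\mathcal{A})$ and $J(\mathcal{A})$ --- one inclusion of Betti tables is semicontinuity under degeneration, while the reverse is exactly the content of the initial-ideal criterion in Theorem \ref{thm:linearity}. A second, subtler point is reconciling ``generated by quadrics'' with ``has a $2$-linear resolution'': for a complete intersection these are genuinely different properties, so one must lean on Theorem \ref{thm:complete-int} (minimal broken circuits pairwise disjoint) to see that under the complete-intersection hypothesis the quadratically generated case collapses exactly to the decomposition asserted in $(3)$.
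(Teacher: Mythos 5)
The paper itself offers no proof of this statement --- it is quoted verbatim (with attribution to \cite{RV}) as a preliminary --- so the only question is whether your outline would actually prove it. Your three ``bridges'' are individually reasonable: the degeneration $in_<(I(\mathcal{A}))=I_<(X)$ is Proudfoot--Speyer \cite{PS}, shellability of $BC_<(X)$ gives Cohen--Macaulayness, Theorem \ref{thm:linearity} then moves $2$-linearity between $I(\mathcal{A})$ and $I_<(X)$, Theorem \ref{thm:decomposit-lin} gives $(3)\Leftrightarrow(5)$, and the direct-sum/product dictionary gives $(3)\Leftrightarrow(4)$ (modulo the typo ``dimension $p$''). Two caveats already here: the commutative Theorem \ref{thm:linearity} does not apply verbatim to the exterior-algebra ideal $J(\mathcal{A})$, so $(6)$ needs a separate argument (Eisenbud--Popescu--Yuzvinsky/K\"ampf--R\"omer type results), and semicontinuity runs the wrong way for transferring the complete-intersection hypothesis from $I(\mathcal{A})$ to its initial ideal: the number of minimal generators can only jump up under degeneration, so ``$I(\mathcal{A})$ CI $\Rightarrow$ minimal broken circuits pairwise disjoint'' is a genuine theorem of \cite{RV}, not a formal consequence of Betti-number semicontinuity.

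The decisive gap is your closing step. You claim that, under the CI hypothesis, Koszulness forces quadratic generation (true), and that quadratic generation together with pairwise disjoint minimal broken circuits ``upgrades'' to $X=U_{2,n-r+2}\oplus U_{r-2,r-2}$. That implication is false. Take $\mathcal{A}=\mathcal{A}_1\times\mathcal{A}_2$ with each $\mathcal{A}_i$ an essential arrangement of three lines in a plane, so $n=6$, $r=4$, $X(\mathcal{A})=U_{2,3}\oplus U_{2,3}$. Here $I(\mathcal{A})$ is a complete intersection of two quadrics in disjoint sets of variables, so $C(\mathcal{A})$ is Koszul, and $\mathcal{A}$ is supersolvable, so $A(\mathcal{A})$ is Koszul; yet $X\neq U_{2,4}\oplus U_{2,2}$, and the Koszul-complex resolution $0\to A(-4)\to A(-2)^2\to I(\mathcal{A})\to 0$ is not $2$-linear. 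Thus $(1)$ and $(2)$ hold while $(3)$--$(6)$ all fail, so no argument can close your cycle of implications: the statement as transcribed in this paper is not the theorem proved in \cite{RV} (note also that under the CI hypothesis condition $(3)$ can only occur for $n-r+2\leq 3$, since $U_{2,m}$ with $m\geq 4$ has intersecting minimal broken circuits, which makes the stated equivalence nearly vacuous). In \cite{RV}, Koszulness under the complete-intersection hypothesis is characterized by quadratic generation/supersolvability-type conditions, not by the single decomposition in $(3)$; your proof attempt founders exactly at the point where it identifies the two.
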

It is well-known that if $B$ is Koszul then $I$ is generated by quadrics. Unfortunately, the converse is not true in general. The following consequence is immediate from the above theorem. %
\begin{theorem}[\cite{RV}] \label{thm:hyperplane-arrange}
The Orlik-Terao ideal $I(\mathcal{A})$ of an essential central arrangement has 2-linear resolution if $\mathcal{A}$ is obtained by successively coning a central arrangement of lines in the plane. In this case The Orlik-Terao algebra is Koszul.
\end{theorem}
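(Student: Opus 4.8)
The plan is to read off the underlying matroid of $\mathcal{A}$ from the coning process and then invoke Theorem~\ref{thm:koszul-hyperplane}. First, a central arrangement of lines in the plane is a pencil: every line passes through the origin of $k^{2}$, so any two are independent and any three dependent, whence the associated matroid is the rank-$2$ uniform matroid $U_{2,n_{0}}$, $n_{0}$ being the number of lines. Second, I record the effect of coning: if $\mathcal{A}'=c\mathcal{A}$ is obtained by homogenizing and adjoining the hyperplane at infinity, then --- because the defining forms of a central arrangement are already homogeneous --- the original hyperplanes are unchanged and the new hyperplane has normal vector independent of all the others, hence is a coloop; so $X(\mathcal{A}')=X(\mathcal{A})\oplus U_{1,1}$, the arrangement remains essential and central, and $C(\mathcal{A}')\cong C(\mathcal{A})[t]$ is a polynomial extension.

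Iterating the coning $r-2$ times from the pencil yields an essential central arrangement $\mathcal{A}$ of $n=n_{0}+(r-2)$ hyperplanes in dimension $r$ with
\[
X(\mathcal{A})\;=\;U_{2,n_{0}}\oplus U_{r-2,r-2}\;=\;U_{2,\,n-r+2}\oplus U_{r-2,r-2},
\]
which is exactly condition~(3) of Theorem~\ref{thm:koszul-hyperplane}. To apply that theorem I must also check its standing hypothesis that $I(\mathcal{A})$ is a complete intersection; since coning only tensors $C(\mathcal{A})$ with a polynomial ring, this reduces to the same assertion for the starting pencil $\mathcal{A}_{1}$, which by Theorem~\ref{thm:complete-int} holds precisely when the minimal broken circuits of $U_{2,n_{0}}$ are pairwise disjoint, i.e. when $I(\mathcal{A}_{1})$ is generated by at most a single quadric. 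Granting this, Theorem~\ref{thm:koszul-hyperplane} applies and the equivalence of its conditions $(2)$, $(3)$, $(5)$ gives at once that $I(\mathcal{A})$ has a $2$-linear resolution and that $C(\mathcal{A})$ is Koszul.

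The delicate point is the middle step --- pinning down exactly which matroid operation coning induces, so that the direct-sum decomposition, hence condition~(3), really appears, while keeping the complete-intersection hypothesis of Theorem~\ref{thm:koszul-hyperplane} in force; once the coning convention is fixed this is a short verification, and the rest is citation. To avoid restricting to the complete-intersection case one may argue directly: $X(\mathcal{A})$ has the form appearing in Theorem~\ref{thm:decomposit-lin} with $s=2$, so $I_{<}(X(\mathcal{A}))$ has a $2$-linear resolution, and this ideal --- being generated by quadratic monomials (the broken circuit complex of $U_{2,n-r+2}\oplus U_{r-2,r-2}$ is the join of a $1$-dimensional star complex with a simplex) --- also defines a Koszul algebra; since $I_{<}(X(\mathcal{A}))$ is an initial ideal of $I(\mathcal{A})$ under the flat degeneration of the Orlik--Terao algebra to the Stanley--Reisner ring of the broken circuit complex, both properties transfer to $I(\mathcal{A})$, cf. the last item of Theorem~\ref{thm:linearity}.
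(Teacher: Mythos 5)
Your main route is exactly the paper's: the paper offers no independent argument for Theorem \ref{thm:hyperplane-arrange} beyond declaring it ``immediate'' from Theorem \ref{thm:koszul-hyperplane}, and your computation that a pencil has matroid $U_{2,n_0}$ and that each coning of a central arrangement adjoins a coloop, giving $X(\mathcal{A})=U_{2,n-r+2}\oplus U_{r-2,r-2}$, is precisely the verification of condition (3) that this citation implicitly requires. Where you go beyond the paper is in noticing that Theorem \ref{thm:koszul-hyperplane}, as stated here, carries the standing hypothesis that $I(\mathcal{A})$ is a complete intersection, and that this hypothesis genuinely fails for a coned pencil of $n_0\geq 4$ lines: the minimal broken circuits of $U_{2,n_0}$ are all pairs from $\{2,\dots,n_0\}$, which are not pairwise disjoint, so by Theorem \ref{thm:complete-int} (and the number-of-generators count $\binom{n_0-1}{2}>n_0-2$) the ideal is a complete intersection only when $n_0\leq 3$. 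So the paper's ``immediate consequence'' is, strictly speaking, only immediate in that degenerate range, and your fallback argument is the correct repair: the decomposition puts you in the situation of Theorem \ref{thm:decomposit-lin} with $s=2$, so $I_<(X(\mathcal{A}))$ has a $2$-linear resolution and is a quadratic monomial ideal; since it arises as a Cohen--Macaulay initial ideal of $I(\mathcal{A})$ under the Proudfoot--Speyer degeneration, the last item of Theorem \ref{thm:linearity} transfers the $2$-linear resolution to $I(\mathcal{A})$, and the existence of a quadratic (monomial) Gr\"obner basis gives Koszulness of $C(\mathcal{A})$. This route needs no complete-intersection assumption and is in fact closer to the argument in \cite{RV}, where the equivalence of the $2$-linear resolution with the matroid decomposition is proved without that hypothesis; what the bare citation buys is brevity, while your version actually covers the full statement.
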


\subsection{Basic operations on matroids}
Assume $X$ is a matroid with an element set $E$, and $S$ is a subset of $E$. The restriction of $X$ to $S$, written $X | S$, is the matroid on the set S whose independent sets are the independent sets of $X$ that are contained in $S$. Its circuits are the circuits of $X$ contained in S, and its rank function is that of $X$ restricted to subsets of $S$. In linear algebra, this corresponds to restricting to the subspace generated by the vectors in $S$. If $T= X-S$, this is equivalent to the deletion of $T$, written $X\smallsetminus T$. The submatroids of $X$ are precisely the results of a sequence of deletions (the order is irrelevant). The dual operation of restriction is contraction. If $T$ is a subset of $E$, the contraction of $X$ by $T$, written $X/T$, is the matroid on the underlying set $E-T$ whose rank function is ${\displaystyle r'(A)=r(A\cup T)-r(T)}$. In linear algebra, this corresponds to looking at the quotient space by the linear space generated by the vectors in $T$, together with the images of the vectors in $E-T$. Then, the Tutte polynomial is defined by the recurrence relation ${\displaystyle T(A)=T(A\cup T)-T(T)}$, \cite{W1, W2, W3}. Given a matroid $([n], I)$, and if $X \subset [n]$ then, the deletion $[n] \smallsetminus X$ is the matroid with ground set $[n] \smallsetminus X$ and independent sets $\{J \subset [n] \smallsetminus X : J \in I\}$. The matroid dual $X^*$ of $X$ is the matroid on $[n]$
where $I$ is a basis of $X$ iff $I^c$ is a basis of $X^*$. If $X \subset [n]$ then the contraction can be  defined as $N/X = (N^* \diagdown X)^*$. 

Now, let $X$ and $Y$ be matroids on the same ground set $E$. We say
that $Y$ is a quotient of $X$ if one of the following equivalent statements holds:
\begin{itemize}
\item Every circuit of $X$ is a union of circuits of $Y$.
\item If $X' \subset Y' \subset E$, then $r_M(Y' )-r_M(X') \geq r_N (Y')-r_N (X')$, ($r$ is the rank function, see \cite{CDMS}).
\item There exists a matroid $R$ and a subset $X'$ of $E(R)$ such that $X=R \smallsetminus X'$ and $Y = R/X'$.
\item For all bases $B$ of $X$, and $x \in B$, there is a basis $B_0$ of $Y$, with $B_0 \subset B$, such that $\{y: (B_0 -y) \cup x \in B(N)\} \subset \{y : (B -y) \cup x \in B(X)\}$.
\end{itemize}
For the equivalences, we refer to \cite{O}, Proposition 7.3.6. If $Y$ be a quotient of $X$, it follows that:
\begin{itemize}
\item Every basis of $Y$ is contained in a basis of $X$, and every basis of $X$ contains a basis of $Y$.
\item $rk(Y) \leq rk(X)$ and in case of equality, $Y = X$.
\end{itemize}
cf. \cite{CDMS} loc. cit. 
\begin{definition} \label{def:stratification}
A stratification of a matroid $X$ is a decreasing filtration sequence $X=M_0 \supset X_1 \supset \dots \supset X_d$ by submatroids $X_j$, where we call $X_j \smallsetminus X_{j+1}$ the $j$-th strata of the stratification. 
\end{definition} 
The Tutte polynomial of $X$ becomes the sum of the Tutte polynomials on each strata $S_j=X_j \smallsetminus X_{j+1}$, [see \cite{W1, W2, W3}]. 
\subsection{The r-cyclic graph} \label{subsec:graph}
Let $(G,V,E)$ be a graph, $V$ the vertices and $E$ the edges. Set $\mathcal{C}$ the edge set of cycles in $G$. It is the set of circuits of a matroid, namely \textit{cycle matroid}. We assume the graph and this matroid are simple, i.e., every circuit has at least three vertices. In \cite{AR} the Cohen-Macaulayness of the facet ideal of $r$-cyclic graph $G_{n,r}$ has been discussed. We can show that the facet ideal of $G_{n,r}$ is a complete intersection.
\begin{definition}[\cite{AR}] \label{def:cyclic graph}
Let $\Delta$ be a simplicial complex over $[n]$. Let $I_F(\Delta)$ be the monomial ideal minimally generated by square-free monomials $f_{F(1)}, \dots ,f_{F(s)}$ such that 
\begin{equation} 
f_{F(i)} =x_{i_1}x_{i_2} \dots x_{i_r}, \qquad \Leftrightarrow \qquad F(i) =\{i_1, \dots ,i_r\} \ \text{is a facet of} \ \Delta \ \text{ for all} \ i	,
\end{equation}
known as the facet ideal of complex $\Delta$ and denoted $I_F(\Delta)$. The graph $G_{n,r}$ has $n$ edges and $r$ distinct cycles. One naturally associates a simplicial complex $\Delta(G_{n,r})=\langle E \smallsetminus \{e_{1i_1}, \dots ,e_{ri_r} \}\rangle$ to $G_{n,r}$, where $C_i=\{ e_{1_1}, \dots ,e_{1_i}\}$ is the $i$-th cycle, \cite{AR}.
\end{definition}
The theory of matroids and their broken circuit complexes is very much tied with the theory of graphs. In Section \ref{sec:applications} we present an example of this feature in our results.
%
%
%
\section{Main Results}\label{sec:main_res}
In this section, we extend some main results presented in \cite{RV} to the corresponding graded notions. All the definitions and theorems in this section are new results. We prove three kinds of results: a) The first is the definition of the graded linearity and graded linear quotient property. We compare the two notions of graded linearity and graded linear quotients on graded $A$-modules, and then we also compare graded linearity with componentwise linearity. b) The second is to identify the graded linearity of the Stanley-Reisner ring of broken circuit complexes in terms of the Hilbert polynomial. c) The third is a stratified decomposition property of the matroid when the Stanley-Reisner ideal of broken circuit complex has graded linear resolution.
\subsection{Graded linear resolutions}
We introduce the two following definitions. Next, we prove a theorem that compares the two definitions.
\begin{definition} 
We say $M$ has graded linear quotient, if there exists a minimal system of generators $a_1, \dots ,a_m$ of $M$ such that the colon ideals satisfy $indeg(J_l)=1$ \footnote{We call the least degree of a homogeneous generator of a graded $A$-module $M$, the
initial degree of $M$, denoted by $indeg(M)$.}.
\end{definition}
\begin{definition}
Say $M$ has graded linear resolution if the graded Betti numbers in the graded Betti table appear in a finite number of horizontal lines, with finitely many non-zero entries in each line, and successively.  
\end{definition}
An example of having graded linear resolution is that of having linear resolution. However, the graded linearity is more general. We stress that having linear property for the graded modules under consideration is a strictly special case. Then, we present our next main result.
\begin{theorem}[Main result]
Assume that $M$ is a graded $A$-module minimally generated by a regular sequence $(a_1, \dots ,a_n)$ in $\mathfrak{m}$; and $J_l$ is defined as in Definition \ref{def:colon-ideals}. If $M$ has graded linear quotients, then $J_l$ has graded linear resolution for any $l$.
\end{theorem}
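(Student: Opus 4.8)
The plan is to argue by induction on $l$, the engine being a short exact sequence on the colon ideals together with the long exact $Tor$-sequence it induces. The first step uses the regular-sequence hypothesis to simplify the colon ideals: since $a_l$ is a non-zerodivisor modulo $(a_1,\dots,a_{l-1})$, one has $J_l=(a_1,\dots,a_{l-1}):a_l=(a_1,\dots,a_{l-1})$. Setting $M_l:=(a_1,\dots,a_l)$ we therefore get $J_l=M_{l-1}$, a chain of complete intersections $0=M_0\subset M_1\subset\cdots\subset M_{n-1}$, and the task reduces to showing that each $M_{l-1}$ has graded linear resolution. Unwinding the hypothesis that $M$ has graded linear quotients through this identification amounts to $indeg(M_{l-1})=1$ for every $l$; in particular, taking $l=2$ forces $\deg a_1=1$.

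For the inductive step I would use, for each $l\ge 2$, the short exact sequence
\begin{equation}
0\longrightarrow M_{l-2}\longrightarrow M_{l-1}\longrightarrow (A/M_{l-2})(-\deg a_{l-1})\longrightarrow 0,
\end{equation}
which follows from $M_{l-1}/M_{l-2}\cong Aa_{l-1}/(Aa_{l-1}\cap M_{l-2})\cong (A/(M_{l-2}:a_{l-1}))(-\deg a_{l-1})$, the identity $M_{l-2}:a_{l-1}=M_{l-2}$, and the fact that $A$ is a domain. Applying $Tor_\bullet^A(-,k)$ gives the long exact sequence
\begin{equation}
\cdots\to Tor_i(M_{l-2},k)_j\to Tor_i(M_{l-1},k)_j\to Tor_i(A/M_{l-2},k)_{j-\deg a_{l-1}}\to Tor_{i-1}(M_{l-2},k)_j\to\cdots,
\end{equation}
which I would combine with the standard isomorphism $Tor_i(A/M_{l-2},k)\cong Tor_{i-1}(M_{l-2},k)$ for $i\ge 1$ coming from $0\to M_{l-2}\to A\to A/M_{l-2}\to 0$. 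The base cases $l\le 2$ are immediate: $M_0=0$ and $M_1=(a_1)\cong A(-1)$ since $\deg a_1=1$, so $0\to A(-1)\to M_1\to 0$ is a linear resolution. Assuming inductively that $M_{l-2}$ has graded linear resolution, so that $\beta_{i,i+p}(M_{l-2})=0$ for all $p$ outside a finite block of successive integers, reading off the long exact sequence shows that $\beta_{i,i+p}(M_{l-1})$ can be non-zero only for $p$ in that block or in its translate by $\deg a_{l-1}-1$; one then concludes that the non-zero graded Betti numbers of $M_{l-1}=J_l$ again occupy only finitely many successive horizontal lines.

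The step I expect to be the main obstacle is exactly the degree bookkeeping in the inductive passage: the third term of the short exact sequence is shifted by $\deg a_{l-1}$, so a priori the Betti table of $M_{l-1}$ could acquire a horizontal line separated by a gap from the block it inherits from $M_{l-2}$, and one has to use the graded linear quotient hypothesis $indeg(J_r)=1$ for all $r$ (together with the complete-intersection structure) to bound the degrees $\deg a_r$ that may occur, so that the union of the block with its translate is once more an interval of successive lines. In addition one must verify that the connecting homomorphisms $Tor_i(A/M_{l-2},k)_{j-\deg a_{l-1}}\to Tor_{i-1}(M_{l-2},k)_j$ do not introduce non-minimal contributions — this follows from the minimality of the resolutions of $M_{l-2}$ and $A/M_{l-2}$ together with a degree count — so that reading the Betti numbers directly off the long exact sequence is legitimate. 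Granting these verifications, assembling the graded Betti table of $J_l=M_{l-1}$ from those of $M_{l-2}$ and $A/M_{l-2}$ through the long exact sequence completes the induction.
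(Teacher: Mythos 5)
Your reduction is sound as far as it goes: since $(a_1,\dots ,a_n)$ is a regular sequence, $J_l=(a_1,\dots ,a_{l-1}):a_l=(a_1,\dots ,a_{l-1})$, the short exact sequence $0\to M_{l-2}\to M_{l-1}\to (A/M_{l-2})(-\deg a_{l-1})\to 0$ is correct, and so is the isomorphism $Tor_i(A/M_{l-2},k)\cong Tor_{i-1}(M_{l-2},k)$ for $i\ge 1$. Note that this is a genuinely different route from the paper, which inducts instead on the quotients $A/J_l$ via the sequence $0\to A/(J_l:a_{l+1})\to A/J_l\to A/J_{l+1}\to 0$ and uses that multiplication by $a_{l+1}\in\mathfrak{m}$ annihilates $Tor(\,\cdot\,,k)$, so that the long exact sequence collapses into short exact sequences of graded Tor modules; your version makes the complete-intersection structure of the colon ideals explicit, essentially rebuilding the Koszul resolution.

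However, the step you yourself flag as the main obstacle --- using $indeg(J_r)=1$ together with the complete-intersection structure to bound the degrees $\deg a_r$ so that the occupied Betti rows union with their translate into an interval --- is a genuine gap, and it cannot be closed from the stated hypotheses. Since $J_r=(a_1,\dots ,a_{r-1})$, the condition $indeg(J_r)=1$ for all $r$ only forces the first generator to have degree one; it imposes nothing on $\deg a_2,\dots ,\deg a_n$. Concretely, take $M=(x_1,x_2^5,x_3)\subset k[x_1,x_2,x_3]$: it is minimally generated by a regular sequence in $\mathfrak{m}$, and $J_2=(x_1)$, $J_3=(x_1,x_2^5)$ both have initial degree one, so $M$ has graded linear quotients in the paper's sense; yet the minimal resolution $0\to A(-6)\to A(-1)\oplus A(-5)\to J_3\to 0$ has nonzero Betti rows $1$ and $5$ only, which are not successive. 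So the translate by $\deg a_2-1=4$ does create exactly the gap you worried about, and your induction cannot deliver graded linear resolution of $J_l$ as defined. To salvage an argument along your lines one must either control the generator degrees by an extra hypothesis (e.g.\ all $\deg a_i$ equal, or consecutive degrees differing by at most one) or weaken the notion of graded linearity so as to allow gaps between occupied rows; be aware also that the paper's own induction never confronts the degree shift by $\deg a_{l+1}$, so it does not supply the missing bookkeeping either, and reading Betti numbers directly off a long exact sequence in general yields only inequalities unless the relevant maps are shown to vanish.
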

\begin{proof}
We use induction on $l$. Consider the short exact sequence, 
\begin{equation}
0 \to \frac{A}{(J_l : z_{l+1})} \stackrel{\times z_{l+1}}{\longrightarrow} \frac{A}{J_l} \longrightarrow \frac{A}{J_{l+1}} \to 0. 
\end{equation}
The first and the last module have quasi-linear resolutions by induction assumption. Now, consider the associated long graded Tor exact sequence:
\begin{align}
\longrightarrow {Tor_i\Big (\frac{A}{(J_l : z_{l+1})},k\Big )}_j \stackrel{\times z_{l+1}}{\longrightarrow} {Tor_i\Big (\frac{A}{J_{l}},k\Big )}_{j+d} \longrightarrow {Tor_i\Big (\frac{A}{J_{l+1}},k\Big )}_{j+d} \longrightarrow \\ {Tor_{i-1}\Big (\frac{A}{(J_l : z_{l+1})},k\Big )}_{j+d-1} 
 \stackrel{\times z_{l+1}}{\longrightarrow} {Tor_{i-1}\Big (\frac{A}{J_{l}},k\Big )}_{j+d-1} \longrightarrow  
\end{align}
where $d=\deg(a_{l+1})$. One notes that multiplication by $z_{l+1}$ is in the annihilator of $k$. This implies 
the sequence    
\begin{equation}
0 \to {Tor_i\Big (\frac{A}{J_{l}},k \Big )}_{j+d} \longrightarrow {Tor_i \Big (\frac{A}{J_{l+1}},k\Big )}_{j+d} \longrightarrow {Tor_{i-1}\Big (\frac{A}{(J_l : a_{l+1})},k \Big )}_{j+d-1} \to 0
\end{equation}
is short exact. We note that ${Tor_i\Big (\frac{A}{J_{l}},k \Big )}_{j+d}$ embeds into ${Tor_i \Big (\frac{A}{J_{l+1}},k\Big )}_{j+d}$. Therefore, the non-triviality of the first one implies the non-triviality of the second. Therefore, in order to check the induction step it suffices to prove
\begin{equation} 
Tor_i \big (\frac{A}{J_{l+1}},k \big )_{j+d} \ne 0 \ \iff \ Tor_{i-1} \big (\frac{A}{(J_l : a_{l+1})},k \big )_{j+d-1}  \ne 0.
\end{equation}
The last assertion is true for the implication
\begin{equation}
{Tor_{i-1}\Big (\frac{A}{(J_l : a_{l+1} )},k\Big )}_{j+d-1}  =0  \qquad   \Rightarrow   \qquad  {Tor_i\Big (\frac{A}{J_{l}},k\Big )}_{j+d} =0.
\end{equation}
Now, the assertion follows by induction, and the proof is complete. 
\end{proof}
In the following theorem, we compare the graded linearity with the componentwise linearity defined in the previous section. As a result, the graded linear property is weaker than componentwise linearity. Therefore, our definition can be regarded as a more general approximation to the concept of componentwise linear resolution.
\begin{theorem}[Main result] 
Assume $M$ is a finitely generated graded $A$-module. Consider the following statements
\begin{itemize}
\item[(1)] $M$ has graded linear resolution.
\item[(2)] $M_{\langle s \rangle}$ have graded linear resolutions for all $s$.
\item[(3)] $M$ is componentwise linear.
\end{itemize}
Then, we have (3) $\Rightarrow$ (2) $\Leftrightarrow$ (1).
\end{theorem}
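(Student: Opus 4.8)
The implication $(3)\Rightarrow(2)$ costs nothing: by Theorem \ref{thm:componentwise-lin} componentwise linearity of $M$ says precisely that every $M_{\langle s\rangle}$ has an $s$-linear resolution, and an ordinary linear resolution is the special case of a graded linear resolution occupying a single horizontal line of the Betti table. So the real content is the equivalence $(1)\Leftrightarrow(2)$, and the plan is to obtain it by comparing the minimal graded free resolution of $M$ with those of the submodules $M_{\langle s\rangle}$ one graded degree at a time, exactly in the manner of the proof of Theorem \ref{thm:componentwise-lin}. The basic tools are the two short exact sequences
$$0\longrightarrow M_{\langle s\rangle}\longrightarrow M\longrightarrow M/M_{\langle s\rangle}\longrightarrow 0\qquad\text{and}\qquad 0\longrightarrow \mathfrak{m}\,M_{\langle s-1\rangle}\longrightarrow M_{\langle s\rangle}\longrightarrow M_{\langle s\rangle}/\mathfrak{m}\,M_{\langle s-1\rangle}\longrightarrow 0,$$
their associated long exact sequences in $Tor_{\bullet}(-,k)$, and the elementary fact that $Tor_i(N,k)_{\ell}=0$ for $\ell<indeg(N)+i$, which holds because the generating degrees of the free modules in a minimal resolution strictly increase at each homological step.

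For $(2)\Rightarrow(1)$ I would induct on the number $m$ of distinct degrees appearing in a minimal generating set of $M$. When $m=1$ one has $M=M_{\langle d_1\rangle}$ with $d_1=indeg(M)$ and there is nothing to prove. For the inductive step one passes to $\overline M:=M/M_{\langle d_1\rangle}$, which is generated in $m-1$ distinct (strictly larger) degrees with $indeg(\overline M)=d_2>d_1$, whose components $\overline M_{\langle s\rangle}$ should be shown to inherit hypothesis $(2)$ from those of $M$, and which therefore has graded linear resolution by the inductive hypothesis; since $M_{\langle d_1\rangle}$ does too, by the base case, the long exact $Tor$ sequence of the first short exact sequence above (where $indeg(\overline M)>d_1$ kills the low-degree contributions) shows that the Betti table of $M$ is assembled from the Betti tables of $M_{\langle d_1\rangle}$ and of $\overline M$ without breaking the ``successive'' pattern. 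For $(1)\Rightarrow(2)$ one runs the same machine backwards: using the second family of sequences one descends from $M_{\langle s\rangle}$ to $M_{\langle s-1\rangle}$, starting from the observation that $M_{\langle s\rangle}$ is generated in the single degree $s$ and for $s$ large coincides with the truncation $M_{\geq s}$, hence has an $s$-linear resolution; an induction on $s$ between $indeg(M)$ and this stabilization index then transports the graded linearity of $M$ onto every $M_{\langle s\rangle}$.

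The step I expect to be the true obstacle is the degree bookkeeping in these long exact $Tor$ sequences: one must control the connecting homomorphisms tightly enough to guarantee that the ``successive'' shape of the Betti table survives gluing (or splitting off) a linear block --- equivalently, that the blocks contributed by the sub- and quotient modules overlap or abut rather than leaving a hole. This is precisely where the vanishing bound $Tor_i(N,k)_{\ell}=0$ for $\ell<indeg(N)+i$ is exploited, applied to the auxiliary modules $M/M_{\langle s\rangle}$ and $M_{\langle s\rangle}/\mathfrak{m}\,M_{\langle s-1\rangle}$, whose initial degrees are pinned down in terms of $s$. A subsidiary but necessary check, needed to close the inductions, is that these auxiliary modules inherit hypothesis $(2)$ (respectively the graded linearity of $(1)$). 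Granting these two points, both implications become formal, and together with the trivial $(3)\Rightarrow(2)$ they yield the asserted chain $(3)\Rightarrow(2)\Leftrightarrow(1)$.
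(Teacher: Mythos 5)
Your proposal is a programme rather than a proof: the two checks you defer at the end are precisely where the mathematical content of the statement lies, so as written there is a genuine gap. Concretely, in your induction for $(2)\Rightarrow(1)$ you must show that $\overline{M}=M/M_{\langle d_1\rangle}$ inherits hypothesis $(2)$; but $\overline{M}_{\langle s\rangle}\cong M_{\langle s\rangle}/(M_{\langle s\rangle}\cap M_{\langle d_1\rangle})$, and graded linearity of $M_{\langle s\rangle}$ gives no formal control over that quotient --- this inheritance step is of the same order of difficulty as the theorem itself. In your descent for $(1)\Rightarrow(2)$ the hypothesis on $M$ never actually enters: the sequence $0\to\mathfrak{m}M_{\langle s-1\rangle}\to M_{\langle s\rangle}\to M_{\langle s\rangle}/\mathfrak{m}M_{\langle s-1\rangle}\to 0$ only relates consecutive components to each other, one cannot formally pass from knowledge of $\mathfrak{m}M_{\langle s-1\rangle}$ back to $M_{\langle s-1\rangle}$, and your base case requires $s\geq \operatorname{reg}(M)$ (truncations $M_{\geq s}$ are linear only beyond the regularity), not merely $s$ beyond the last generating degree. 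Finally, the bound $Tor_i(N,k)_{\ell}=0$ for $\ell<indeg(N)+i$ only limits where Betti numbers can live; it cannot by itself exclude cancellation in the connecting homomorphisms, which is exactly what could punch a hole in a row and destroy the ``successive'' pattern. That this bookkeeping is not a formality is visible already for $I=(x^2,y^4)\subset k[x,y]$: every component $I_{\langle s\rangle}$ has a graded linear (indeed at most two-row) resolution, while the nonzero rows of the Betti table of $I$ are $2,4,5$; so under the reading of the definition in which the rows must be consecutive, the implication you are trying to make formal is decided precisely by the step you postpone.

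For comparison, the paper's own argument is entirely different and much lighter: it dismisses $(3)\Rightarrow(2)\Rightarrow(1)$ as immediate from Theorem \ref{thm:componentwise-lin} (a linear resolution being the one-line case of a graded linear one), and obtains $(1)\Rightarrow(2)$ from the single assertion that a graded resolution of $M_{\langle s\rangle}$ can be included as a subresolution of a graded resolution of $M$. Your Tor-machinery is therefore not the paper's route, and as submitted it does not yet close either implication; to make it work you would have to prove the inheritance statement and the descent step in full, and before that you would need to fix the (loosely stated) definition of graded linear resolution precisely enough to know which of these statements is even true.
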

\begin{proof}:
The implications (3) $\Rightarrow$ (2) $\Rightarrow$ (1), are evident by Theorem \ref{thm:componentwise-lin}. Therefore, we check out (1) $\Rightarrow$ (2). This follows from the fact that a graded resolution of $M_{\langle s \rangle}$ can be included (is a subresolution) in a graded resolution of $M$.
\end{proof}

\subsection{Graded linearity and broken circuit complexes}
We want to characterize the graded linear resolution property for the Stanley-Reisner rings of broken circuit complexes. The following result generalizes Theorem \ref{thm:decomposit-lin} presented in the previous section and conforms to part of our main results.
\begin{theorem} \label{thm:grlin-StanleyReisner}
Let $(X, <)$ be an ordered simple matroid of rank $r$ on $[n]$. The followings are equivalent:
\begin{itemize}
\item[(1)] $I_<(X)$ has graded linear resolution.
\item[(2)] If $a_1, \dots , a_{n-q}$ is a maximal $(A/I_<(X))$-regular sequence of linear forms in $A$, we have 
\begin{equation} \label{eq:graded-quotient}
\frac{\overline{A}}{\overline{I_<(X)}}=\bigoplus_{i=0}^d\ \frac{\overline{\mathfrak{m}}^{s_i}}{\overline{\mathfrak{m}}^{s_{i+1}}},
\end{equation}
in $A/(a_1, \dots ,a_{n-s})$.
\item[(3)] The Hilbert function of $I_<(X)$ has the form,
\begin{equation}
H(\frac{A}{I_<(X)},s)=c_0{s+q-1 \choose s}+c_1{s+q-2 \choose s}+ \cdots +c_{d-1}{s+q-d \choose s},
\end{equation}
for some $d$.
\item[(4)] If $\prec $ is a monomial order on $A$ and if $\ in_{\prec}(I_<(X))\ $ is Cohen-Macaulay, $\ in_{\prec}(I_<(X))\ $ has graded linear resolution.
\end{itemize}
\end{theorem}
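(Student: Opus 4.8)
The plan is to treat this statement as the ``stack of linear strands'' counterpart of Theorem \ref{thm:linearity} and to push all four conditions down to an Artinian reduction, where powers of the maximal ideal are easy to handle. Since $BC_<(X)$ is shellable, $A/I_<(X)$ is Cohen--Macaulay, so one may choose a maximal $(A/I_<(X))$--regular sequence of linear forms $a_1,\dots,a_{n-q}$ and pass to $R:=A/(a_1,\dots,a_{n-q})$, a polynomial ring on $q$ variables in which $\bar I:=\overline{I_<(X)}$ is $\mathfrak m_R$--primary. Reduction modulo a regular sequence of linear forms leaves all graded Betti numbers unchanged and transforms the Hilbert function by $q$--fold differencing; consequently $I_<(X)$ has graded linear resolution exactly when the Betti table of $\bar I$ is a finite, successively arranged union of finitely supported linear strands, and conditions $(2)$ and $(3)$ are assertions about $R/\bar I$. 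Hence it suffices to prove the equivalences after this reduction, and then treat $(4)$ separately by a Gr\"obner degeneration argument.

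I would run the cycle $(1)\Rightarrow(3)\Rightarrow(2)\Rightarrow(1)$. For $(1)\Rightarrow(3)$: if the minimal resolution of $\bar I$ is a stack of finitely many linear strands, its graded Betti numbers split strand by strand, and summing each strand's contribution to the Hilbert series of $R/\bar I$ collapses --- via the binomial (hockey--stick) identities already used in \S\ref{sec:prel} --- to a Hilbert function of the shape in $(3)$, the coefficients $c_i$ recording the strand multiplicities. For $(3)\Rightarrow(2)$: a Hilbert function of that form determines $R/\bar I$ on each successive degree window (the windows being cut out by the jumps among the exponents $s_i$) and forces it to agree there with a quotient of $R$ by a power of $\mathfrak m_R$; a window--by--window application of the one--strand equivalence of Theorem \ref{thm:linearity} then identifies $R/\bar I$, as a graded module, with the layered object $\bigoplus_{i=0}^{d}\overline{\mathfrak m}^{s_i}/\overline{\mathfrak m}^{s_{i+1}}$ appearing in \eqref{eq:graded-quotient}. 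For $(2)\Rightarrow(1)$: each layer $\overline{\mathfrak m}^{s_i}/\overline{\mathfrak m}^{s_{i+1}}$ has a minimal resolution assembled from the linear resolutions of two consecutive powers of $\mathfrak m_R$ (via the evident short exact sequence and a mapping cone), hence is itself a finite stack of linear strands; since a minimal resolution of a direct sum is the direct sum of minimal resolutions, $R/\bar I$ again resolves as a successively arranged family of linear strands, i.e. $\bar I$ has graded linear resolution. This closes $(1)\Leftrightarrow(2)\Leftrightarrow(3)$.

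For $(4)$ I would use upper semicontinuity of graded Betti numbers under Gr\"obner degeneration, $\beta_{i,j}(I_<(X))\le\beta_{i,j}(\mathrm{in}_\prec(I_<(X)))$, together with the fact that $A/I_<(X)$ and $A/\mathrm{in}_\prec(I_<(X))$ have the same Hilbert function. Assuming $\mathrm{in}_\prec(I_<(X))$ is Cohen--Macaulay, the implication $(1)\Rightarrow(3)$ applied to $I_<(X)$ shows that this common Hilbert function has the shape of $(3)$; the implication $(3)\Rightarrow(1)$ --- whose proof above used only Cohen--Macaulayness and the form of the Hilbert function --- then applies to $\mathrm{in}_\prec(I_<(X))$ and yields a graded linear resolution for it.

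The step I expect to be the genuine obstacle is the rigidity hidden in $(3)\Rightarrow(2)$: one must show that a Hilbert function of binomial shape leaves no room for cancelling (``ghost'') pairs $\beta_{i,j},\beta_{i+1,j+1}$, so that $R/\bar I$ is forced into the split layered form \eqref{eq:graded-quotient} rather than merely sharing its Hilbert function. For a single strand this is classical, since $\mathfrak m_R^{\,s}$ is the extremal ideal for its Hilbert function and has a linear resolution; the content here is that this extremality propagates undisturbed across all the windows, the separation $s_i<s_{i+1}$ being exactly what prevents interference between consecutive strands. This is the algebraic shadow of the equality case in Bj\"orner's $f$--vector inequality (Theorem \ref{thm:inequality-fvector}) and of the stratified refinement announced in the introduction, and I would establish it either by iterating the extremality/mapping--cone argument one window at a time, or by passing to the generic initial ideal: it has the same Hilbert function, is windowwise of the split type, and its Eliahou--Kervaire resolution is visibly a successively arranged stack of linear strands, which forces equality of all graded Betti numbers with $\bar I$.
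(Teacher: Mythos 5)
Your architecture is genuinely different from the paper's proof, which only verifies (2)$\Rightarrow$(3), obtains (4) from (3) together with the behaviour of the Hilbert function, and disposes of the remaining step by Artinian reduction with a pointer to \cite{RV}, Prop.~3.1. Your cycle (1)$\Rightarrow$(3)$\Rightarrow$(2)$\Rightarrow$(1) puts the load on (1)$\Rightarrow$(3), and that step does not work as you state it. The hypothesis ``graded linear resolution'' constrains only which rows of the Betti table are occupied, not the values $\beta_{ij}$; after your (correct) Artinian reduction one has $H(R/\bar I,t)=\sum_{i,j}(-1)^i\beta_{ij}\,t^j/(1-t)^q$, so the ``strand multiplicities'' $c_i$ are not even defined by the shape of the table, and the collapse of the Hilbert function to $\sum_i c_i\binom{s+q-i-1}{s}$ is not automatic. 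In the one-strand case of Theorem \ref{thm:linearity} that collapse is obtained by first proving the rigidity statement $\bar I=\overline{\mathfrak m}^{\,s}$ (regularity plus Cohen--Macaulayness), i.e.\ by passing through condition (2); the multi-strand analogue of that rigidity is exactly what your plan postpones to (3)$\Rightarrow$(2). As organized, the argument is therefore circular at its key joint: the implication you actually need first is (1)$\Rightarrow$(2), which is where the paper leans on the argument of \cite{RV}, Prop.~3.1, and which your cycle never establishes.

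Two further concrete problems. First, \eqref{eq:graded-quotient} cannot be read, as you do in (3)$\Rightarrow$(2) and again in (2)$\Rightarrow$(1), as an isomorphism of graded modules: $\overline A/\overline{I_<(X)}$ is cyclic over the graded local ring $A/(a_1,\dots,a_{n-q})$, hence indecomposable by graded Nakayama, so it is never a nontrivial direct sum. Condition (2) has to be interpreted as the filtration of $R/\bar I$ by the images of the powers $\overline{\mathfrak m}^{\,s_i}$ having the indicated subquotients (equivalently, as a statement about the associated graded object or the Hilbert function); consequently your (2)$\Rightarrow$(1) cannot proceed by ``a minimal resolution of a direct sum is the direct sum of minimal resolutions'' --- you must resolve the filtered object by iterated mapping cones along the filtration and then verify minimality, i.e.\ rule out consecutive cancellation, which is precisely the nontrivial content and not bookkeeping. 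Second, your fallback for the rigidity via the generic initial ideal proves too much: equality of all graded Betti numbers $\beta_{ij}(I)=\beta_{ij}(\mathrm{gin}(I))$ characterizes componentwise linear ideals (Aramova--Herzog--Hibi, Conca--Herzog), while this paper explicitly treats graded linearity as strictly weaker than componentwise linearity, so that route is unavailable without establishing a stronger conclusion than the theorem asserts. (Your treatment of (4), by semicontinuity of Betti numbers under Gr\"obner degeneration, equality of Hilbert functions, and the Cohen--Macaulay hypothesis on the initial ideal, is sound and is essentially the paper's own argument for that item.)
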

\begin{proof} (3) follows from (2), and also, (4) follows from (3) and the equality of the Hilbert functions 
\begin{equation} 
H(A/I_<(X),s)=H(\overline{A/I_<(X)},s),
\end{equation}
where the bar means the image in $A/(a_1, \dots ,a_{n-q})$. Also, one has the stronger statement 
\begin{align}
H(A/I_<(X),s)=H(\overline{A/I_<(X)},s)\leq H(\bigoplus_{i=0}^d\ \frac{\overline{\mathfrak{m}}^{s_i}}{\overline{\mathfrak{m}}^{s_{i+1}}},s)=\sum_{i=0}^{d-1}c_i{s+q-i-1 \choose s},
\end{align}
with equality when \eqref{eq:graded-quotient} holds. In fact, by factoring out the regular sequence $(a_1, \dots ,a_{n-q})$ one can assume $A$ is Artinian. Then, it is easy to establish that it has the form \eqref{eq:graded-quotient}, [see also \cite{RV} proposition 3.1]. This also proves (2).  
\end{proof}
\begin{remark}
The coefficients $c_i=c_i(I_<(X))$ can be easily calculated, (called Hilbert coefficient of $I_<(X)$). The sum in the Hilbert polynomial is the sum of the Hilbert function of each summand where can be calculated by hand. 
\end{remark}
The following lemma is a generalization of proposition 7.5.2 in \cite{B} and forms part of our main result. 
\begin{lemma} \label{thm:lemma}
Assume $X$ is a rank $r$ loopless matroid on $[n]$ such that the Stanley-Reisner ideal of the broken circuits of $X$ has graded linear resolution. Then,
\begin{equation} \label{eq:gen-inequality}
I_{j-1} \geq \sum_{i=0}^{c-1}\sum_{l=1}^dc_l^{(S)}{n-r+l-1 \choose l}{r-i \choose j-i} 
\end{equation}
and the equality holds if and only if there exists a stratification $X=X_0 \supset X_1 \supset \dots \supset X_d$ by submatroids $X_j$ for some $d$ such that on each strata,
\begin{equation}
X_j \diagdown X_{j+1}= U_{r_j-s_j,r_j-s_j} \bigoplus U_{s_j,n_j-r_j+s_j},
\end{equation}
where $s_j \leq r_j$ are positive integers and $U_{l,k}$ denotes the uniform matroid. And also, We have $n=\Sigma_j n_j, \ r=\Sigma_j r_j$. 
\end{lemma}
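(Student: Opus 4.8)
The plan is to deduce the inequality from Brylawski's bound (Theorem \ref{thm:inequality-fvector}(2)) applied one stratum at a time, using the Hilbert-function characterization of graded linearity (Theorem \ref{thm:grlin-StanleyReisner}) to produce the stratification and the additivity of the Tutte polynomial over a stratification to assemble the pieces.

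First I would extract the numerical content of the hypothesis. Because $BC_<(X)$ is an $(r-1)$-dimensional shellable complex, $A/I_<(X)$ is Cohen--Macaulay of Krull dimension $r$, so $I_<(X)$ has codimension $q=n-r$; by Theorem \ref{thm:grlin-StanleyReisner}(3) the hypothesis that $I_<(X)$ has graded linear resolution is then equivalent to
\[
H\!\left(\frac{A}{I_<(X)},s\right)=\sum_{l=1}^{d}c_l^{(S)}\binom{s+q-l}{s}.
\]
Translating this into the $h$- and $f$-vectors of $BC_<(X)$ through the standard relation $f_{j-1}=\sum_i h_i\binom{r-i}{j-i}$ (with $f_{j-1}=I_{j-1}$ for an independence complex), the decomposition of $H$ splits the $h$-vector of $BC_<(X)$ into $d$ blocks; by the combinatorial identities recalled right after Theorem \ref{thm:inequality-fvector}, the $l$-th block is, up to the weight $c_l^{(S)}$, exactly the $h$-vector one obtains when all subsets of size $s_l$ of the ground set lie in the complex. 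Summing Brylawski's termwise lower bound over these $d$ blocks then produces the right-hand side of \eqref{eq:gen-inequality}, which gives the inequality.

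Next I would build the stratification, by induction on $d$, peeling blocks off the top. The leading block $c_1^{(S)}\binom{s+q-1}{s}$ is the Hilbert function of an ideal with an honest $s_1$-linear resolution, so by Theorem \ref{thm:decomposit-lin} it forces a decomposition into uniform matroids and isolates a submatroid $X_1\subset X$ (a deletion, in the sense of Section \ref{sec:prel}) whose independence complex accounts for the remaining $d-1$ blocks. Here I would use the additivity $T_X(x,1)=\sum_l T_{S_l}(x,1)$ over a stratification, recalled at the end of Section \ref{sec:prel}, to match the Tutte-polynomial splitting with the $h$-vector splitting; the cardinality and rank additivities $n=\sum_l n_l$ and $r=\sum_l r_l$ then follow from the direct-sum shape of each stratum. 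Applying the equality clause of Theorem \ref{thm:inequality-fvector}(2) stratum by stratum shows that equality in \eqref{eq:gen-inequality} holds if and only if it holds in every block, i.e.\ if and only if $X_j\smallsetminus X_{j+1}=U_{r_j-s_j,\,r_j-s_j}\oplus U_{s_j,\,n_j-r_j+s_j}$ with $s_j\le r_j$ for all $j$.

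The hard part will be the construction of the stratification itself: upgrading the purely numerical splitting of the Hilbert function into a genuine decreasing chain $X=X_0\supset X_1\supset\cdots\supset X_d$ of submatroids whose strata carry the prescribed Tutte-polynomial pieces. The identity $T_X=\sum_l T_{S_l}$ alone does not guarantee that a compatible family of submatroids exists, so one must work with the shelling of $BC_<(X)$, grouping its shelling intervals (equivalently, the restriction faces of a shelling order) according to which Hilbert-coefficient block they feed, and then verify that the chain can be chosen so that each $X_j\smallsetminus X_{j+1}$ is again a matroid --- of the required uniform-sum type precisely in the equality case. By contrast, once the $h$-vector splitting is available the inequality \eqref{eq:gen-inequality} is essentially immediate, being Brylawski's bound applied blockwise and summed.
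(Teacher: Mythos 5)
Your proposal follows essentially the same route as the paper: extract the Hilbert-function form of $H(A/I_<(X),s)$ from Theorem \ref{thm:grlin-StanleyReisner}, split the $h$-vector of $In(X)$ into $d$ blocks, obtain the inequality \eqref{eq:gen-inequality} by applying the bound of Theorem \ref{thm:inequality-fvector} blockwise, and read the stratification with uniform-sum strata off the corresponding splitting of the Tutte polynomial $T_X(x,1)=h_\Delta(x)$, with equality corresponding to the vanishing $h_c=h_{c+1}=\dots=0$. The step you single out as the hard part --- upgrading the numerical $h$-vector/Tutte splitting into an actual chain $X=X_0\supset X_1\supset\dots\supset X_d$ of submatroids --- is exactly the point the paper also settles only by assertion (``the form of the Tutte polynomial shows the existence of stratification,'' with the strata $h$-vectors taken to be the truncated sums), so your sketch is at the same level of detail as the published argument.
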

\begin{proof} 
Following the argument of Theorem \ref{thm:inequality-fvector} of previous section [or theorem 7.2.5 \cite{B}], and letting $\Delta=In(X)$, then we have,
\begin{equation} 
h_k=H(A/I_<(X),k)=\ \sum_lc_l{k+q-l-1 \choose k}, \qquad k=0,1,2, \dots ,c-1,
\end{equation} 
where $c_l$ are Hilbert coefficients of $I_<(X)$. Then,
\begin{equation}
b(X)=\sum_lc_l{q-l+p \choose p}     
\end{equation}
is the number of bases of $X$. Besides, the $h$-vector is given by,
\begin{equation} 
\underline{\textbf{h}}=\left (\ 1 \ \ ,\  \sum_lc_l{q-l \choose 1},\ \  \dots \ \ ,\ \sum_l c_l{c-1+q-l-1 \choose c-1}\ \right ),
\end{equation}
and the Tutte polynomial of $X$ is $T_X(x,1)=h_{\Delta}(x)=x^r+h_1x^{r-1}+ \cdots + h_px^{r-p}$. The form of the Tutte polynomial shows the existence of stratification $X=X_0 \supset X_1 \supset \dots \supset X_d$ by submatroids $X_j$ such that,
\begin{equation}
X_j \diagdown X_{j+1}= U_{r_j-s_j,r_j-s_j} \bigoplus N_j
\end{equation}
holds, where $N_j$ is a matroid of rank $s_j$ on $n_j-r_j+s_j$ points. In fact, the $\underline{\textbf{h}}$-vector of the submatroids $X_j$ is given by the truncations of the sums appearing in the $\underline{\textbf{h}}$-vector of $X$. Thus,
\begin{equation}
b(N_j)=b(X_j \diagdown X_{j+1})=  c_j{c-j+q-l \choose c-1}  
\end{equation} 
holds, and this shows that $N_j$ has the desired form. The equality in \eqref{eq:gen-inequality} holds when $h_c=h_{c+1}= \dots =0$.
\end{proof}
We can now formulate the graded linearity of the Stanley-Reisner ideal of the broken circuits in terms of the combinatorics of $M$. The Theorem \ref{thm:grlin-StanleyReisner} and the Lemma \ref{thm:lemma} can be used together to obtain the following decomposition property, expressed in the following result.
\begin{theorem}[Main result]\label{thm:stratification}
Let $(X,<)$ be an ordered simple matroid of rank $r$ on $[n]$, and $I_<(X)$ the Stanley-Reisner ideal of broken circuit complex of $X$. Then, the followings are equivalent.
\begin{itemize}
\item[(1)] $I_<(X)$ has graded linear resolution.
\item[(2)] There exists a stratification $X=X_0 \supset X_1 \supset \dots \supset X_d$ for some $d$ such that on each strata 
\begin{equation}
X_j \diagdown X_{j+1}= U_{r_j-s_j,r_j-s_j} \bigoplus U_{s_j,n_j-r_j+s_j}
\end{equation}
holds, where $s_j \leq r_j$ are positive integers. Besides, we have $n=\Sigma_j n_j, \ r=\Sigma_j r_j$. 
\end{itemize}
\end{theorem}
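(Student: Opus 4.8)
The plan is to deduce Theorem~\ref{thm:stratification} by chaining together the two results already in hand: Theorem~\ref{thm:grlin-StanleyReisner}, which translates ``$I_<(X)$ has graded linear resolution'' into the explicit Hilbert-function shape
$H(A/I_<(X),s)=\sum_{i=0}^{d-1}c_i\binom{s+q-i-1}{s}$, and Lemma~\ref{thm:lemma}, which converts exactly that Hilbert-function shape into the existence of a stratification whose strata split as $U_{r_j-s_j,r_j-s_j}\oplus U_{s_j,n_j-r_j+s_j}$. So the first step is to observe that (1)~$\Rightarrow$~(2) is essentially immediate: assume $I_<(X)$ has graded linear resolution; since $X$ is simple, $BC_<(X)$ is shellable and $A/I_<(X)$ is Cohen--Macaulay, so the hypotheses of Lemma~\ref{thm:lemma} are met (looplessness follows from simplicity), and the equality case of \eqref{eq:gen-inequality} is forced precisely by the closed form of the Hilbert function coming from Theorem~\ref{thm:grlin-StanleyReisner}(3); the lemma then hands us the desired stratification together with the numerical identities $n=\sum_j n_j$, $r=\sum_j r_j$.

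For the converse (2)~$\Rightarrow$~(1) I would run the computation backwards. Given the stratification with $S_j=X_j\smallsetminus X_{j+1}=U_{r_j-s_j,r_j-s_j}\oplus U_{s_j,n_j-r_j+s_j}$, I would use the additivity of the Tutte polynomial over a stratification (recorded after Definition~\ref{def:stratification}) to write $T_X(x,1)=\sum_j T_{S_j}(x,1)$, and then compute $T_{S_j}(x,1)$ for each stratum from the known structure of uniform matroids: a free summand $U_{r_j-s_j,r_j-s_j}$ contributes a pure power of $x$ (coloops), while $U_{s_j,n_j-r_j+s_j}$ contributes the truncated binomial $h$-vector $\bigl(1,\,n_j-r_j,\,\dots,\,\binom{n_j-r_j+s_j-1}{s_j}\bigr)$ exactly as in the single-matroid computation recalled in Section~\ref{sec:prel}. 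Summing these, the $h$-vector of $X$ becomes a sum of shifted truncated binomial vectors, which is precisely the coefficient pattern $h_k=\sum_l c_l\binom{k+q-l-1}{k}$ appearing in Theorem~\ref{thm:grlin-StanleyReisner}(3); reading off the $c_l$ as the Hilbert coefficients and invoking Theorem~\ref{thm:grlin-StanleyReisner} then gives that $I_<(X)$ has graded linear resolution. I would also note that $q=n-r$ is the codimension of $I_<(X)$ and that $h_c=h_{c+1}=\cdots=0$ is automatic once the Hilbert function has this bounded-degree polynomial form, so no extra hypothesis is needed.

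The main obstacle I anticipate is bookkeeping rather than conceptual: making the bijection between the data ``$(s_j,r_j,n_j)_{j=0}^{d-1}$'' and the ``Hilbert coefficients $(c_l)$'' genuinely tight and order-consistent. One must check that the truncations of the summed $h$-vector that Lemma~\ref{thm:lemma} attributes to the submatroids $X_j$ really do reassemble into the $S_j$ with the claimed ranks and sizes, i.e.\ that the filtration one recovers from the Hilbert data is compatible with the quotient/contraction structure underlying a stratification (Definition~\ref{def:stratification}), not merely a formal decomposition of polynomials. I would handle this by an induction on $d$: peel off the top stratum $S_0=U_{r_0-s_0,r_0-s_0}\oplus U_{s_0,n_0-r_0+s_0}$ corresponding to the leading truncated-binomial block of the $h$-vector, verify that $X_1$ is genuinely a submatroid (a sequence of deletions) with $h$-vector equal to the remaining truncations, and apply the inductive hypothesis to $X_1$, whose Stanley--Reisner ideal again has graded linear resolution by Theorem~\ref{thm:grlin-StanleyReisner} applied to the residual Hilbert function. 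The base case $d=1$ is exactly Theorem~\ref{thm:decomposit-lin}.
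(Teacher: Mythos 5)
Your (1) $\Rightarrow$ (2) direction is essentially the paper's own argument: use Theorem \ref{thm:grlin-StanleyReisner} to put the Hilbert function of $A/I_<(X)$ in the prescribed binomial form, convert this into the $f$-vector identity, and invoke the equality case of Lemma \ref{thm:lemma} to extract the stratification. For (2) $\Rightarrow$ (1), however, you take a genuinely different route. The paper does not argue via Hilbert functions at all: it reorders the variables so that $I_<(X)$ is exhibited explicitly as the monomial ideal generated block-by-block by the ``initial segment'' monomials $x_{j_1}\cdots x_{j_{s_1}}$ with $j_1<\dots<j_{s_1}<n-r+s_1$, and so on for each stratum, and then asserts directly that this ideal has graded linear resolution. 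You instead sum the stratum contributions to the $h$-vector using the Tutte-polynomial additivity recorded after Definition \ref{def:stratification} together with the uniform-matroid computation recalled in Section \ref{sec:prel}, match the resulting pattern with condition (3) of Theorem \ref{thm:grlin-StanleyReisner}, and conclude by the implication (3) $\Rightarrow$ (1) of that theorem. The two routes are at a comparable level of rigor: the paper's is shorter and avoids relying on the full equivalence in Theorem \ref{thm:grlin-StanleyReisner} (whose proof in the paper only really addresses some implications), but it rests on the unargued claim that the displayed block monomial ideal is graded linear; yours stays entirely inside the Hilbert-function framework where Lemma \ref{thm:lemma} lives, at the cost of leaning on that equivalence and on Tutte additivity over strata. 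One caution about your proposed induction on $d$: the step in which graded linearity of $I_<(X_1)$ plus the structure of the top stratum yields graded linearity of $I_<(X)$ needs an explicit mechanism relating $I_<(X)$ to $I_<(X_1)$ (e.g.\ a join or direct-sum factorization of the broken circuit complex along the stratification), which you do not supply --- though the paper's one-line assertion at the corresponding point is no more detailed, so this is a shared, not a new, weakness.
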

\begin{proof}
We first show that $(2) \Rightarrow (1)$. First, assume that (2) holds. Then, by re-arrangement of variables, we can assume that
\begin{align}
I_<(X)=\big \langle \ x_{j_1} \cdots x_{j_{s_1}}\ ,\ \dots \ ,\  x_{l_1}\cdots x_{l_{p}} \ | \ \ j_1 < \dots < j_{s_1} < n-r+s_1\\
,\ \dots \ ,\  l_1 < \dots < l_{p}< n-r+p\ \big \rangle.    
\end{align}
This ideal has graded linear resolution.

Next, we show $(1) \Rightarrow (2)$. For that, let us assume that (1) holds, i.e., assume $I_<(X)$ has graded linear resolution. Then, the properties (2) and (3) of Theorem \ref{thm:grlin-StanleyReisner}, if $\textbf{a}=(a_1,\dots ,a_r)$ is a maximal $S/I_<(X)$-regular sequence of linear forms, then
\begin{equation}
S=A/(I_<(X)+\textbf{a}) = \bigoplus_{j=0}^d \overline{\mathfrak{m}}^{s_j} / \overline{\mathfrak{m}}^{s_{j+1}}, \qquad (\mathfrak{m}^0=A)  .
\end{equation}
Thus, the Hilbert function of $A/I_<(X)$ is given by
\begin{equation}
H(A/I_<(X), t)= H(S,t)/\prod_i(1-t^{s_i}) .
\end{equation}
So that the $f$-vector of Stanley-Reisner ring is given by
\begin{equation}
f_{j-1} = \sum_{i=0}^rc_i{r-i \choose j-i}  =\sum_{i=0}^r\sum_{l=1}^dc_l{n-r+l-1 \choose l}{r-i \choose j-i}, \end{equation}
where $d=\deg(H(S,t))$ and $c_l=c_l^{(S)}$ are the coefficients of $H(S,t)$. Thus, the lemma \ref{thm:lemma} implies the decomposition in item (2).
\end{proof}
\begin{remark}
We highlight three main points:
\begin{itemize}
\item There are modules over the polynomial ring that are not componentwise linear but satisfy being \textit{graded linear}. A simple example is a direct sum of a non-componentwise linear module with one componentwise linear, where the resolutions are just direct sums of the two resolutions of the modules. Being "graded linear" is a relatively weak property compared to being componentwise linear.
\item Let $X$ be the graphic matroid of parallel connection of a 3-circuit with a 4-circuit. Then, $X$ is a non-uniform 2-connected matroid, and therefore can not be a direct sum of uniform matroids. We can order the elements of $X$ from 1 to 6 in a way that the Stanley-Reisner ideal of the broken circuit complex of $M$ gets equal to $I(X)=
(x_5x_6, x_2x_3x_6, x_2x_3x_4x_5)$. Then, $I(X)$ has a linear quotient and is, therefore, componentwise linear. This example shows that the stratification mentioned in the above theorem is essential.
\item There exist componentwise (matroidal) ideals, which are not generated by the regular sequence. For instance, let $M$ be the same as the previous item, the parallel connection of a 3-circuit and a 4-circuit, then $I(X)$ is componentwise linear and has linear quotients, but a regular sequence does not generate it.
\end{itemize}
\end{remark}
\section{Applications} \label{sec:applications}
This section applies the results of the previous section to the Orlik-Solomon ideal and the Orlik-Terao ideal of the arrangements of hyperplanes. Our results are direct interpretations of the theorems presented in Section \ref{sec:main_res}, but into the language of hyperplane arrangements. All the definitions and theorems of this section are new contributions to this text.
\subsection{Application to hyperplane arrangements}
We begin by the following theorem.
\begin{theorem}\label{thm:gradedlinear-hyperplane}
For an essential central arrangement of $\mathcal{A}$ of $n$ hyperplanes, located in a vector space of dimension $r$, the following points are equivalent.
\begin{itemize}
\item $I(\mathcal{A})$ has graded linear resolution.
\item There exists a stratification $X=X_0 \supset X_1 \supset \dots \supset X_d$ by submatroids $X_j$ for some $d$ such that  
$X_j(\mathcal{A}) \diagdown X_{j+1}(\mathcal{A})= U_{r_j-s_j,r_j-s_j} \\  \bigoplus U_{s_j,n_j-r_j+s_j}$ where $s_j \leq r_j$ are positive integers. Also, we have $n=\Sigma_j n_j, \ r=\Sigma_j r_j$. 
\item $\mathcal{A}=\mathcal{A}_1 \times \mathcal{A}_2 \times \dots \times \mathcal{A}_k$ where $\mathcal{A}_1$ is boolian and the other factors $\mathcal{A}_j$ for $ j >1$ are generic arrangements of $n_j-r_j+s_j$ hyperplanes ina $s_j$-dimensional vector space. 
\end{itemize}
\end{theorem}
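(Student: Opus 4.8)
The plan is to reduce everything to the combinatorics of the underlying matroid $X = X(\mathcal{A})$ and then invoke the matroid-theoretic Theorem \ref{thm:stratification} together with Theorem \ref{thm:decomposit-lin} / Theorem \ref{thm:koszul-hyperplane}. The key input is the fact (used already in \cite{RV}) that for an essential central arrangement the Orlik–Terao ideal $I(\mathcal{A})$ and the Stanley–Reisner ideal $I_<(X)$ of the broken circuit complex share the same Betti table up to a choice of monomial order: more precisely, $in_\prec(I(\mathcal{A})) = I_<(X)$ for a suitable term order $\prec$, and this initial ideal is Cohen–Macaulay because the broken circuit complex is shellable. Hence by part (4) of Theorem \ref{thm:grlin-StanleyReisner}, $I(\mathcal{A})$ has graded linear resolution if and only if $I_<(X)$ does. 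This establishes the equivalence of the first bullet with "$I_<(X)$ has graded linear resolution", and then Theorem \ref{thm:stratification} immediately converts the latter into the stratification statement in the second bullet. So the first two bullets are equivalent essentially for free, once the initial-ideal comparison is in place.

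The remaining work is the equivalence of the second and third bullets, i.e.\ translating the strata-wise decomposition of $X$ into a product decomposition of $\mathcal{A}$. First I would recall that a direct sum decomposition of matroids $X = X' \oplus X''$ corresponds, on the level of arrangements, to a product decomposition $\mathcal{A} = \mathcal{A}' \times \mathcal{A}''$ (coning/product of arrangements realizes direct sum of the associated matroids), that a uniform matroid $U_{p,m}$ with $p = m$ is realized by a Boolean arrangement, and that a uniform matroid $U_{s,m}$ with $s < m$ is realized precisely by a generic arrangement of $m$ hyperplanes in an $s$-dimensional space. Collecting the "full" uniform blocks $U_{r_j - s_j,\, r_j - s_j}$ across all strata gives a single Boolean factor $\mathcal{A}_1$ of dimension $\sum_j (r_j - s_j)$, while each $U_{s_j,\, n_j - r_j + s_j}$ with $s_j>0$ contributes a generic factor $\mathcal{A}_{j+1}$ of $n_j - r_j + s_j$ hyperplanes in dimension $s_j$; the bookkeeping identities $n = \sum_j n_j$, $r = \sum_j r_j$ are exactly what make the dimensions and hyperplane counts add up. Conversely, given such a product decomposition one reads off the strata by grouping factors, so the implication runs both ways.

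The main obstacle I anticipate is \emph{not} the linear-algebraic realization of uniform matroids, which is classical, but rather the claim that a stratification of $X$ with the prescribed strata forces a genuine direct sum (hence product) decomposition on the nose — a stratification is a priori only a filtration by submatroids, and one must check that the graded (strata-wise) decomposition actually assembles into an honest $\oplus$ of the whole matroid. This is precisely the content secured by the $h$-vector / Tutte-polynomial argument in the proof of Lemma \ref{thm:lemma} (the Tutte polynomial of $X$ is the sum of those of the strata, and the shape of $h_\Delta(x)$ pins down the coloops), so I would lean on that computation: the vanishing $h_c = h_{c+1} = \cdots = 0$ in the equality case of \eqref{eq:gen-inequality} is what rules out any "mixing" between strata and yields the clean product. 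Once that is granted, the passage between the matroid language and the hyperplane-arrangement language is a routine dictionary translation, and I would present it as such rather than belaboring it.
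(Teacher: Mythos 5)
Your overall route is the one the paper intends: its entire proof of this theorem is the single sentence that the statement is ``an interpretation of the Theorems in the previous section,'' analogous to the 2-linear case in \cite{RV}. For the equivalence of the first two bullets you are in fact more careful than the paper, making the bridge explicit via the fact (from \cite{PS}, as used in \cite{RV}) that $I_<(X)$ is a Cohen--Macaulay initial ideal of the Orlik--Terao ideal, and then appealing to Theorem \ref{thm:grlin-StanleyReisner}(4) together with Theorem \ref{thm:stratification}. One small caveat there: part (4) of Theorem \ref{thm:grlin-StanleyReisner} only passes graded linearity \emph{from} $I_<(X)$ \emph{to} its initial ideals, so for the direction you actually need --- from the initial ideal $in_{\prec}(I(\mathcal{A}))=I_<(X)$ back to $I(\mathcal{A})$ --- you should invoke the semicontinuity $\beta_{ij}(I(\mathcal{A}))\leq\beta_{ij}\bigl(in_{\prec}(I(\mathcal{A}))\bigr)$ explicitly; with the paper's definition of graded linearity (Betti numbers confined to consecutive rows) that inequality does the job, but it is not literally what (4) says.

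The genuine gap is in your treatment of the second versus the third bullet. You correctly identify the obstacle --- a stratification is only a filtration by submatroids, and strata-wise direct sums do not obviously assemble into a direct sum of $X(\mathcal{A})$, equivalently a product decomposition of $\mathcal{A}$ --- but the way you discharge it does not work. Lemma \ref{thm:lemma} cannot ``secure'' the assembly: its conclusion is exactly the stratification of the second bullet and nothing more, and the equality case $h_c=h_{c+1}=\dots=0$ is used there only to make each stratum split, not to exclude interaction between strata. Moreover, the paper's own remark after Theorem \ref{thm:stratification} (the parallel connection of a 3-circuit with a 4-circuit: a 2-connected graphic, hence realizable, matroid whose broken circuit ideal has linear quotients and is therefore graded linear) shows that graded linearity does \emph{not} force $X(\mathcal{A})$ to be a direct sum of uniform matroids; so no argument along your lines can upgrade the stratification to the global product $\mathcal{A}=\mathcal{A}_1\times\dots\times\mathcal{A}_k$ as the third bullet is literally written --- that bullet can only match the others if it is reinterpreted stratum by stratum. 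To be fair, the paper's one-line proof does not address this either, but the step you yourself flagged as the main obstacle remains unresolved by pointing at Lemma \ref{thm:lemma}.
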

\begin{proof}
The proof is analogous to the 2-decomposition case in \cite{RV}, and is just an interpretation of the Theorems in the previous section.
\end{proof}
In the following, we generalize the Koszul property to that of graded Koszul in a natural way. First, let consider the following definition.
\begin{definition} 
In the settings of Definition \ref{def:koszul} we call $B$ graded Koszul if $k$ has graded linear resolution.
\end{definition} 
The following consequence is immediate from the above theorem.
\begin{theorem}
The Orlik-Terao ideal $I(\mathcal{A})$ has 2-graded linear resolution if $\mathcal{A}$ is obtained by successively coning central arrangements of lines in the plane. In this case, the Orlik-Terao algebra is graded Koszul.
\end{theorem}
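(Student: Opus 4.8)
The plan is to obtain this theorem as a direct corollary of Theorem~\ref{thm:gradedlinear-hyperplane} together with the earlier Theorem~\ref{thm:hyperplane-arrange}, essentially by re-running the argument that produced Theorem~\ref{thm:hyperplane-arrange} one ``graded level'' higher. First I would recall the construction underlying Theorem~\ref{thm:hyperplane-arrange}: if $\mathcal{A}$ is obtained by successively coning a central arrangement of lines in the plane, then $X(\mathcal{A})$ decomposes — in the linear (non-graded) case — as $U_{2,n-r+2}\oplus U_{r-2,r-2}$, because coning adds coloops (the free part $U_{r-2,r-2}$) while the underlying planar line arrangement is generic of rank $2$. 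The point of passing to the graded setting is that we no longer need the decomposition to hold globally: we only need a stratification $X=X_0\supset X_1\supset\cdots\supset X_d$ whose strata are of the form $U_{r_j-s_j,r_j-s_j}\oplus U_{s_j,n_j-r_j+s_j}$, and the coning construction manifestly produces such a stratification with each $s_j=2$ (or $s_j\le 2$), since each cone step over a rank-$2$ generic piece contributes a stratum of exactly this shape.

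The key steps, in order, are: (i) identify the matroid $X(\mathcal{A})$ of the coned arrangement and exhibit explicitly the stratification coming from the iterated coning, verifying that each stratum $X_j\smallsetminus X_{j+1}$ has the form $U_{r_j-s_j,r_j-s_j}\bigoplus U_{s_j,n_j-r_j+s_j}$ with $s_j\le 2$; (ii) invoke the equivalence (2)$\Leftrightarrow$(1) of Theorem~\ref{thm:gradedlinear-hyperplane} to conclude that $I(\mathcal{A})$ has graded linear resolution; (iii) check that the relevant initial degrees $s_j$ are all equal to $2$ (the Orlik--Terao ideal of a rank-$2$ generic arrangement is generated by quadrics, exactly as in Theorem~\ref{thm:hyperplane-arrange} and Theorem~\ref{thm:koszul-hyperplane}(5)), so that the graded linear resolution is in fact a \emph{$2$-graded linear resolution} in the sense of the preceding definition; (iv) apply the definition of graded Koszul: since $I(\mathcal{A})$ has $2$-graded linear resolution, the residue field $k$ has graded linear resolution over $C(\mathcal{A})=A/I(\mathcal{A})$, exactly mirroring the implication (5)$\Rightarrow$(2) in Theorem~\ref{thm:koszul-hyperplane}, so $C(\mathcal{A})$ is graded Koszul.

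I would organize the write-up so that (i) is the substantive content and (ii)--(iv) are one-line deductions. The main obstacle I anticipate is step (iii), namely verifying that the stratification produced by coning is \emph{uniform in the initial degree}: one must argue that every stratum has $s_j=2$ rather than a mix of values, since otherwise one only gets a (graded-linear but not $2$-graded-linear) resolution and the ``$2$-graded'' refinement in the statement fails. This reduces to the combinatorial fact that the only broken circuits introduced by a cone step are the $3$-element broken circuits coming from the rank-$2$ generic fibre, which in turn rests on the genericity of the original planar line configuration and the fact that coning does not create new circuits of size $\le 3$ beyond those already forced. A secondary, more cosmetic point is to make precise that ``$I(\mathcal{A})$ has $2$-graded linear resolution'' does entail ``$k$ has graded linear resolution over $C(\mathcal{A})$'': this follows from the standard change-of-rings / Koszulness argument already used for Theorem~\ref{thm:koszul-hyperplane}, applied stratum by stratum along the stratification, and I would simply cite that theorem's proof technique rather than reproduce it.

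Finally, as a sanity check I would note consistency with Theorem~\ref{thm:hyperplane-arrange}: when the iterated coning degenerates to a single stratum (i.e.\ $d=0$), the stratification collapses to the global decomposition $X(\mathcal{A})=U_{2,n-r+2}\bigoplus U_{r-2,r-2}$ and ``graded Koszul'' collapses to ``Koszul'', recovering the earlier result; thus the present theorem is the natural ``stratified'' upgrade of Theorem~\ref{thm:hyperplane-arrange}, exactly as Theorem~\ref{thm:gradedlinear-hyperplane} is the stratified upgrade of Theorem~\ref{thm:koszul-hyperplane}.
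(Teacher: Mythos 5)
Your proposal follows essentially the same route as the paper, which simply states that the result is a direct application of Theorem~\ref{thm:gradedlinear-hyperplane} (mirroring how Theorem~\ref{thm:hyperplane-arrange} follows from Theorem~\ref{thm:koszul-hyperplane} in the ungraded case of \cite{RV}); your steps (i)--(iv) just spell out that application, exhibiting the stratification produced by coning and checking the degree-two uniformity and the graded Koszul conclusion. In fact your write-up is more detailed than the paper's one-line proof, and it correctly flags the only substantive points (that every stratum has $s_j=2$ and that $2$-graded linearity of $I(\mathcal{A})$ yields graded Koszulness of $C(\mathcal{A})$) that the paper leaves implicit.
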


\begin{proof}
The proof is a direct application of Theorem \ref{thm:gradedlinear-hyperplane}.
\end{proof}
\begin{theorem}
With the same set-up of Theorem \ref{thm:gradedlinear-hyperplane} the following are equivalent
\begin{itemize}
\item[(1)] $A(\mathcal{A})$ is graded Koszul.
\item[(2)] $C(\mathcal{A})$ is graded Koszul. 
\item[(3)] There exists a stratification $X=X_0 \supset X_1 \supset \dots \supset X_d$ by submatroids $X_j$ for some $d$ such that  
$X_j(\mathcal{A}) \diagdown X_{j+1}(\mathcal{A})= U_{r_j-s_j,r_j-s_j} \\  \bigoplus U_{s_j,n_j-r_j+t_j}$, where $s_j \leq r_j$ are positive integers. Also, we have $n=\Sigma_j n_j, \ r=\Sigma_j r_j$. 
\item[(4)] $\mathcal{A}=\mathcal{A}_1 \times \mathcal{A}_2 \times \dots \times \mathcal{A}_k$ where $\mathcal{A}_1$ is boolian and the other factors $\mathcal{A}_j$ for $ j >1$ are generic arrangement of $n_j-r_j+s_j$ hyperplanes in a $s_j$-dimensional vector space. 
\item[(5)] $I(\mathcal{A})$ has $2$-graded linear resolution.
\item[(6)] $J(\mathcal{A})$ has $2$-graded linear resolution.
\end{itemize}
\end{theorem}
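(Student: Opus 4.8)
The plan is to reduce the six-way equivalence to the previous theorem (Theorem \ref{thm:gradedlinear-hyperplane}) by exploiting the deformation relationship between the Orlik--Solomon algebra $A(\mathcal{A})$ and the Orlik--Terao algebra $C(\mathcal{A})$, mirroring the architecture of the proof of Theorem \ref{thm:koszul-hyperplane} but with ``$2$-linear'' replaced everywhere by ``$2$-graded linear''. Concretely, I would first establish the cycle $(3)\Leftrightarrow(4)\Leftrightarrow(5)$ using Theorem \ref{thm:gradedlinear-hyperplane}: the equivalence $(3)\Leftrightarrow(4)$ is literally the stratified product decomposition already recorded there, and $(5)$ is the statement that the particular horizontal strip in the graded Betti table of $I(\mathcal{A})$ starts in homological degree giving a $2$-linear strand on each stratum, which by Theorem \ref{thm:grlin-StanleyReisner} and Theorem \ref{thm:stratification} (applied to the broken circuit ideal $I_<(X(\mathcal{A}))$, whose initial ideal governs $I(\mathcal{A})$ by the standard Orlik--Terao / broken-circuit correspondence, cf. \cite{RV}) is equivalent to each stratum being a generic arrangement with circuit size $s_j+1=2$ in the ``nontrivial'' factor, i.e.\ exactly $(4)$.

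Next I would handle $(5)\Leftrightarrow(6)$. Here the point is that $J(\mathcal{A})$ and $I(\mathcal{A})$ have the same Hilbert series and the same ``combinatorial skeleton'' — both are governed by the broken circuit complex of $X(\mathcal{A})$ — so by Theorem \ref{thm:grlin-StanleyReisner}(3) the graded linear resolution property for one transfers to the other through the equality of Hilbert functions; the $2$-graded refinement follows because the degree of the lowest nonzero strand (namely $2$, coming from the minimal circuits of size $2$, i.e.\ parallel hyperplanes in the generic strata) is a combinatorial invariant of $X(\mathcal{A})$ and is read off identically in both algebras. This is essentially the commutative/exterior deformation argument of \cite{HH}, \cite{RV}, and I would cite it rather than redo it.

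Finally, for $(1)\Leftrightarrow(5)$ and $(2)\Leftrightarrow(6)$ — equivalently, ``graded Koszul $\Leftrightarrow$ the defining ideal has $2$-graded linear resolution'' — I would argue as in the classical case: if $C(\mathcal{A})$ (resp.\ $A(\mathcal{A})$) is graded Koszul then $k$ has graded linear resolution over it, and restricting to the first syzygies of $k$ forces the defining ideal $I(\mathcal{A})$ (resp.\ $J(\mathcal{A})$) to be generated in degree $2$ with its relations laid out along the permitted horizontal lines, giving $(5)$ (resp.\ $(6)$). Conversely, when $I(\mathcal{A})$ has $2$-graded linear resolution the stratified decomposition of $(3)$ writes $C(\mathcal{A})$ as a tensor product of a polynomial ring (the boolean factor $\mathcal{A}_1$) with Orlik--Terao algebras of generic arrangements, each of which is Koszul by Theorem \ref{thm:hyperplane-arrange}; since Koszulness (and, a fortiori, graded Koszulness) is preserved under tensor products over $k$, $C(\mathcal{A})$ is graded Koszul, and the same computation on the exterior side gives $(1)$. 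The main obstacle I anticipate is the converse direction of $(1)/(2)\Rightarrow(5)/(6)$: one must check that the \emph{graded} (rather than strictly linear) resolution of $k$ over $B=L/I$ still forces $I$ into the prescribed union of horizontal strands — the subtlety being that a graded-linear resolution permits several strands, so one has to verify that the first step of the minimal resolution of $k$ cannot ``jump'' past degree $2$, which again comes down to the combinatorics of the minimal broken circuits having size $2$ on the generic strata; once that degree-$2$ generation is pinned down, the higher strands are controlled by Theorem \ref{thm:stratification}.
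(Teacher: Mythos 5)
Your proposal is correct in outline and takes essentially the same route as the paper, whose entire proof consists of citing Theorem \ref{thm:gradedlinear-hyperplane} for the equivalence of (2)--(5) and ``the argument in Theorem \ref{thm:stratification}'' for (1) $\Leftrightarrow$ (2). Your write-up is in fact more detailed than the paper's own (the deformation argument for (5) $\Leftrightarrow$ (6), the tensor-product reduction for the converse), and the one direction you flag as delicate --- graded Koszulness forcing the defining ideal into a $2$-graded linear resolution --- is exactly the step the paper leaves entirely implicit, so you are not missing anything the paper actually supplies.
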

\begin{proof}
The equivalence of (2)-(5) was already established above. The equivalence of (1)-(2) follows from Theorem \ref{thm:gradedlinear-hyperplane} and the argument in Theorem \ref{thm:stratification}.
\end{proof}
The characterization of the Koszul property of Orlik-Solomon algebras of matroids (arrangments) is one of the essential problems in this area. It is a conjecture that; a matroid (an arrangement) is supersolvable if and only if its Orlik-Solomon algebra is Koszul, [see \cite{RV, CNR} for instance].
\subsection{Application on the facet ideal of the $r$-cyclic graph}
Our last result is an application to prove the entire intersection property of the facet ideal of the $r$-cyclic graph, [see Subsection \ref{subsec:graph}].
\begin{proposition}
The facet ideal $I_F(\Delta(G_{n,r}))$ is a complete intersection.
\end{proposition}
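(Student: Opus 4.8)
The goal is to prove that the facet ideal $I_F(\Delta(G_{n,r}))$ is a complete intersection. The strategy is to reduce to the combinatorial criterion of Theorem \ref{thm:complete-int} applied to the cycle matroid of $G_{n,r}$, and to exhibit explicitly a minimal generating set of $I_F(\Delta(G_{n,r}))$ that is a regular sequence in $A$. First I would recall from Definition \ref{def:cyclic graph} that $G_{n,r}$ has $n$ edges and $r$ distinct cycles $C_1,\dots,C_r$, and that $\Delta(G_{n,r})=\langle E\smallsetminus\{e_{1i_1},\dots,e_{ri_r}\}\rangle$, so the facets correspond exactly to the choices of one edge deleted from each cycle; dually, the minimal non-faces — equivalently the generators of $I_F(\Delta(G_{n,r}))$ — are indexed by the cycles themselves, each cycle $C_t$ contributing (up to the combinatorics of shared edges) a squarefree monomial supported on $C_t$. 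The key structural point of the $r$-cyclic graph is that the cycles meet pairwise in a controlled way (they are the circuits of a simple cycle matroid, hence every circuit has at least three edges), and the construction of $G_{n,r}$ makes the relevant broken circuits pairwise disjoint.

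The main steps, in order, are as follows. (1) Identify the cycle matroid $X=X(G_{n,r})$ and fix an order $<$ on its ground set $E$; describe the circuits $\mathcal{C}(X)$ (the cycles of the graph) and their broken circuits $bc_<(C)=C\smallsetminus\min_<(C)$. (2) Verify that $(X,<)$ is a simple ordered matroid in the sense required before Theorem \ref{thm:complete-int}, i.e. that for distinct circuits $C,C'$ in the chosen generating set one has $C\cap C'\in\{\min_<(C),\min_<(C')\}$ or $C\cap C'=\emptyset$ — this is where the specific edge-sharing pattern of $G_{n,r}$ is used. (3) Check the equivalent condition in Theorem \ref{thm:complete-int} that the minimal broken circuits of $X$ are pairwise disjoint: by choosing the order so that the shared edges are precisely the minima of the relevant cycles, the broken circuits $bc_<(C_t)$ become pairwise disjoint squarefree monomials. (4) Conclude via Theorem \ref{thm:complete-int} that $I_<(X)$ is a complete intersection, and then observe that $I_F(\Delta(G_{n,r}))$ coincides with (or is isomorphic as a monomial ideal to) $I_<(X)$ — this is the Stanley-Reisner/facet-ideal duality packaged in Definition \ref{def:cyclic graph} — so the complete intersection property transfers. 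Pairwise disjointness of the monomial generators then directly exhibits them as a regular sequence in $A=k[x_1,\dots,x_n]$, giving a second, self-contained proof of the Cohen-Macaulayness of $A/I_F(\Delta(G_{n,r}))$ as promised in the introduction.

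\textbf{Main obstacle.} The delicate step is (2)–(3): showing that one can choose the ordering $<$ on the edges so that the broken circuits of the cycle matroid of $G_{n,r}$ are genuinely pairwise disjoint, and that the resulting generating set is simple. The $r$-cyclic graph's cycles need not be edge-disjoint, so a naive choice of minima can leave two broken circuits overlapping; the argument must use the precise combinatorics of how the $r$ cycles are glued (via the description $C_i=\{e_{1_1},\dots,e_{1_i}\}$ and the deletion structure of $\Delta(G_{n,r})$) to place each shared edge as the minimum of one of the cycles containing it. Once the ordering is pinned down, the remaining verifications — that the monomials $x_{bc_<(C_t)}$ minimally generate $I_F(\Delta(G_{n,r}))$ and form a regular sequence — are routine, since disjoint squarefree monomials in a polynomial ring always form a regular sequence. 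I would therefore spend most of the write-up on constructing the order and checking disjointness, and treat the passage through Theorem \ref{thm:complete-int} and the regular-sequence conclusion briefly.
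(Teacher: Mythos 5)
Your proposal is correct and follows the paper's own route: identify $I_F(\Delta(G_{n,r}))$ with the Stanley--Reisner ideal of the broken circuit complex of the cycle matroid $X(G_{n,r})$, check that the (minimal) broken circuits are pairwise disjoint, and conclude by Theorem \ref{thm:complete-int} (with the disjoint squarefree generators also giving Cohen--Macaulayness directly). The only divergence is that the step you flag as the main obstacle is dispatched in one line in the paper: because $G_{n,r}$ has exactly $r$ distinct cycles, these cycles must be pairwise edge-disjoint (two cycles sharing edges would produce additional cycles via their symmetric difference), so the broken circuits are pairwise disjoint for any ordering and no delicate construction of $<$ is required.
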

\begin{proof}
By definition, $G_{n,r}$ is a graph on $n$ edges with exactly $r$ distinct cycles. Therefore, the broken circuits of the cycle matroid $X(G_{n,r})$ are pairwise disjoint. On the other hand, the Stanley-Reisner ideal of the broken circuit complex of cycle matroid $X(G_{n,r})$ is exactly the facet ideal of the complex $\Delta(G_{n,r})$. Now, the theorem follows from Theorem  \ref{thm:complete-int}.
\end{proof}
\begin{corollary}
The facet ideal $I_F(\Delta(G_{n,r}))$ is Cohen-Macaulay.
\end{corollary}
The corollary has been proved in \cite{AR} by a direct computational method. We proved it as an immediate consequence of our result. A comprehensive analysis of the entire intersection property for graph matroids related to broken circuits is given in \cite{RV}.
%
%
%
%
\section{Conclusion}\label{sec:conc}
The notion of graded linear resolution is introduced, and several characterizations of this property are shown for certain the graded algebras associated with matroids. The notion is compared with that of graded linear quotions and componentwise linearity in general. Specifically, it is proved that the graded linearity of the Stanley-Reisner ideal of broken circuit complex of a simple matroid implies a mixed decomposition of a matroid into uniform matroids, along with a stratification. The translation of these results into the language of hyperplane arrangements is also given at the end. Finally, an application to complete the intersection property of the facet ideal of $r$-cyclic graph is given.
\section*{Conflict of Interest} 
The work's authors declare that they do not have any conflict of interests. 
\section*{Author contributions statement}
M. Reza-Rahmati and G. Flores reviewed the manuscript and contributed equally to the work.

\bibliographystyle{hindawi_bib_style}

\end{document}